\newcommand{\mcs}{\mathrm{mcs}}
\newtheorem{theorem}{Theorem}
\newtheorem{corollary}[theorem]{Corollary}
\newtheorem{lemma}[theorem]{Lemma}
\title{How to burn a Latin square}
\author[A.\ Bonato]{Anthony Bonato}
\author[C.\ Jones]{Caleb Jones}
\author[T.G.\ Marbach]{Trent G.\ Marbach}
\author[T.\ Mishura]{Teddy Mishura}
\address[A1,A2,A3,A4]{Toronto Metropolitan University, Toronto, Canada}
\email[A1]{(A1) abonato@torontomu.ca}
\email[A2]{(A2) caleb.w.jones@torontomu.ca}
\email[A3]{(A3) trent.marbach@torontomu.ca}
\email[A4]{(A4) tmishura@torontomu.ca}
\begin{document}

\begin{abstract}

We investigate the lazy burning process for Latin squares by studying their associated hypergraphs. In lazy burning, a set of vertices in a hypergraph is initially burned, and that burning spreads to neighboring vertices over time via a specified propagation rule. The lazy burning number is the minimum number of initially burned vertices that eventually burns all vertices. The hypergraphs associated with Latin squares include the $n$-uniform hypergraph, whose vertices and hyperedges correspond to the entries and lines (that is, sets of rows, columns, or symbols) of the Latin square, respectively, and the $3$-uniform hypergraph, which has vertices corresponding to the lines of the Latin square and hyperedges induced by its entries.

Using sequences of vertices that together form a vertex cover, we show that for a Latin square of order $n$, the lazy burning number of its $n$-uniform hypergraph is bounded below by $n^2-3n+3$ and above by $n^2-3n+2 + \lfloor \log_2 n \rfloor.$ These bounds are shown to be tight using cyclic Latin squares and powers of intercalates. For the $3$-uniform hypergraph case, we show that the lazy burning number of Latin squares is one plus its shortest connected chain of subsquares. We determine the lazy burning number of Latin square hypergraphs derived from finitely generated groups. We finish with open problems.
\end{abstract}

\maketitle

\section{Introduction}\label{introduction}

Graph burning is a simplified model for the spread of influence in a network. Associated with the process is a parameter introduced in \cite{first_paper,BJR0}, the burning number, which quantifies the speed at which the influence spreads to every vertex. Given a graph $G$, the burning process on $G$ is a discrete-time process. At the beginning of the first round, all vertices are unburned. In each round, first, all unburned vertices that have a burned neighbor become burned, and then one new unburned vertex is chosen to burn if such a vertex is available. The \emph{burning number} of $G,$ written $b(G),$ is defined to be the least $k$ such that after $k$ rounds, each vertex of $G$ is burned. For further background on graph burning, see the survey \cite{survey} and the book \cite{bbook}.

How do we burn a Latin square? The \emph{Latin square graph} of a Latin square $L$ of order $n,$ is the graph with $n^2$ vertices labeled with the cells of the Latin square, where distinct vertices are adjacent if they share a row, column, or symbol. The burning number of a Latin square graph is at most 3, so it is less interesting. Our approach, instead, is to consider burning various hypergraphs constructed from Latin squares. We will consider propagation rules for burning on hypergraphs that lead to more subtle results. 

First, we provide some background and notations on Latin squares and hypergraphs. An \emph{entry} of a Latin square is a triple $(r,c,s)$, where symbol $s$ occurs in row $r$ and column $c$. We write $(r,c,s)\in L$ if $(r,c,s)$ is an entry of $L$ and denote the entry-set of $L$ as $E(L)$, respectively. A \emph{row}-\emph{line} of a Latin square of order $n$ is the $n$-set of entries in $L$ in a particular row. \emph{Column}- and \emph{symbol}-\emph{lines} are defined similarly. We refer to the row-lines, column-lines, and symbol-lines collectively as the \emph{lines} of $L$ and define $\mathcal{L}(L)$ to be the set of all lines of $L$. 
A \emph{subsquare} of a Latin square $L$ is a subset of the entries of $L$ such that $L'$ is a Latin square in its own right. 
A subsquare $L'$ of $L$ is \emph{trivial} if $L'=L$ or if $L'$ is a single entry, and is \emph{proper}, otherwise. 

Let $L$ be a Latin square of order $n.$ The $n$-uniform hypergraph of $L$, written $H_L$, has vertices $E(L)$ and hyperedges $\mathcal{L}(L)$. This gives an $n$-uniform hypergraph with $n^2$ vertices and $3n$ hyperedges. A hyperedge of $H_L$ is called a \emph{row-edge} if it corresponds to a row-line of the Latin square. The \emph{column-edges} and \emph{symbol-edges} are defined similarly. The $3$-uniform hypergraph of $L$, written $H^L$, is the hypergraph with vertex-set $\mathcal{L}(L)$, and whose hyperedges are the elements of $E(L)$, where we think of an entry $(r,c,s)\in E(L)$ as the set containing the three lines $r$, $c$, and $s$. This gives a $3$-uniform tripartite hypergraph with $3n$ vertices and $n^2$ hyperedges. The hypergraphs $H_L$ and $H^L$ are dual hypergraphs since swapping the roles of vertices and hyperedges in one yields the other.  

\emph{Hypergraph burning} was introduced in \cite{our_paper!,mythesis} as a natural extension of graph burning. The rules for hypergraph burning are identical to those of burning graphs, except for how burning propagates within a hyperedge. In hypergraphs, the burning spreads to a vertex $v$ in round $r$ if and only if $v$ was not burned by the end of round $r-1$ and there is a non-singleton hyperedge $\{v,u_1,\ldots,u_k\}$ such that each of $u_1,u_2,\ldots,u_k$ was burned by the end of round $r-1$. That is, a vertex becomes burned if it is the only unburned vertex in a hyperedge. The definition of burning in hypergraphs, therefore, generalizes burning in graphs. 

A natural variation of burning that is our principal focus here is \emph{lazy hypergraph burning} (also introduced in \cite{our_paper!,mythesis}), where a set of vertices is chosen to burn in the first round; no other vertices are chosen to burn in later rounds.  The \textit{lazy burning number} of $H$, denoted $b_L(H)$, is the minimum cardinality of a set of vertices burned in the first round that eventually burn all vertices. Note that for a hypergraph $H,$ we have that $b_L(H) \le b(H).$ We refer to the set of vertices chosen to burn in the first round as a \emph{lazy burning set}. A \emph{minimum} lazy burning set $S$ is one with $|S|=b_L(H)$. If a vertex is burned and is not part of a lazy burning set, then we say it is burned by \emph{propagation}; all rounds other than the first where a lazy burning set is chosen are called \emph{propagation rounds}.

For an example, consider the hypergraph $H_L$ corresponding to the following Latin square of order $3$, where the darker shaded entries are part of the lazy burning set.

\medskip
    
    \begin{center}\label{fig:burning_example}
    \begin{tabular}{|c|c|c|}
    \hline
         \cellcolor{black!50}1& \cellcolor{black!50}2& \cellcolor{black!20}3\\ \hline
         \cellcolor{black!50}2&3&1\\ \hline
         \cellcolor{black!20}3&1&\cellcolor{black!20}2 \\\hline
    \end{tabular}
    \end{center}

\medskip
\noindent The lighter shaded entries will burn in the first propagation round, as each of them is a member of a hyperedge with two burned vertices. 
This process repeats each round until there are no more vertices to burn; here, the entire hypergraph is burned by the end of the second propagation round.
 
In Theorem~\ref{thm:mcs_equality_FuncOf_stc}, we show that the maximum length of a cover-sequence in a Latin square is determined by the length of its shortest connected chain. We use this in Corollary~\ref{cor:bounds_Bl} to show that $n^2-3n+3 \le b_L(H_L) \le n^2-3n+2 + \lfloor \log_2 n \rfloor,$ and these bounds are shown to be tight using cyclic Latin squares and powers of intercalates. 
In the 3-uniform hypergraph case, $b_L(H^L)$ is shown to be one plus the length of $L$s smallest connected chain in Theorem~\ref{thm:3unistcupper}. Hence, we show that $b_L(H_L)$ and $b_L(H^L)$ differ by a simple parameter of the hypergraph. 
For the case of finitely generated groups, the lazy burning number of the associated 3-uniform hypergraphs is shown as the number of generators plus 2; see Corollary~\ref{dim_plus_two}. We finish with open problems.

For more background on Latin squares, see \cite{designs_handbook,keed,design_text}. For more background on hypergraphs, see \cite{berge1,berge2,bretto}. 

\section{Burning the $n$-uniform case}

We present results on the lazy burning number of two Latin square hypergraph constructions: the $n$-uniform $H_L$ and the 3-uniform $H^L$. We additionally introduce the purely design-theoretic ideas of Latin square \emph{cover-sequences} and \emph{connected chains}, which, while originally developed as machinery for bounds on $b_L(H_L)$ and $b_L(H^L)$, we consider independently interesting. These concepts are tightly linked to the lazy burning number of both hypergraph constructions, which we show below, beginning with the $n$-uniform case.

Intuitively, a good lazy burning set for a Latin square $L$ should be efficient, including multiple rows, columns, and symbols per entry. This is closely related to the idea of a \emph{cover} of $H_L$ (or $L$), which is a set of entries $\{e_1, e_2,\ldots, e_d\}$ such that each row, column, and symbol of $L$ is represented at least once in the set of entries. 

Covers for Latin squares were introduced in \cite{MR3942279} as a way to study partial transversals. There is an extensive literature surrounding transversals and partial transversals. The famous Ryser-Brualdi-Stein conjecture asks whether all even Latin squares have a partial transversal of size $n-1$ and all odd Latin squares have a transversal. A number of authors have contributed to this problem \cite{trans1, trans2, trans3, trans4, trans5, trans6}, and a recent preprint claims to have solved the problem \cite{montgomary_RBS_proof}. For Latin squares that are the Cayley table of a group, the Hall-Paige conjecture \cite{hall_paige_conj} asked whether certain conditions were enough to guarantee that the Latin square contained a transversal, which is equivalent to there being a complete mapping of the group. This was confirmed in \cite{hall_paige_conj2, hall_paige_conj3}. 

Another useful concept for Latin squares is whether a Latin square can be decomposed into transversals, which is closely linked to orthogonality of Latin squares; see \cite{wanless_survey}. Due to such an important connection, a number of structures closely related to transversals are studied, including $k$-plexes \cite{trans_like1, trans_like2, trans_like3} and covers. Latin squares that contain varying lengths of maximal partial transversals have been studied \cite{many_maximal_partial_transversal}. There are also a variety of alternatives to regular orthogonality in Latin squares \cite{orth1, orth2, orth5, orth3, orth4}, and these may induce structures similar to transversals. For a recent review of the literature on transversals in Latin squares, see \cite{transveral_review}. 

While seemingly similar in definition, covers and lazy burning sets are distinct, with neither property implying the other. For example, in Figure \ref{fig:burning_example}, the lazy burning set $M$ provided is not a cover. However, there is a deeper relationship between the two; the \textit{complement} $V(H_L) \setminus M$ of $M$ is, in fact, a cover. Moreover, this set is a stronger form of a cover. 
For a Latin square $L$, define a \emph{cover-sequence} for $H_L$ as a sequence of entries $(e_1,\ldots,e_d)$ of $L$ such that 

\begin{enumerate}
    \item The set $\{e_1, \ldots, e_d\}$ is a cover of $H_L$.
    \item For each $i$, $e_i$ contains a row, a column, or a symbol not represented in $\{e_1, e_2, \ldots, e_{i-1}\}$, which is the set of earlier entries of the sequence.
\end{enumerate}

 We may view a cover-sequence as an online version of a minimal Latin square cover, noting that a minimal cover can be ordered to form a cover-sequence, but dropping the order of a cover-sequence does not, in general, result in a minimal cover.  We will show that these objects arise from the lazy burning sets of $H_L$, beginning with the following lemma. Observe that removing any vertex from a minimal lazy burning set results in a set that no longer successfully burns the hypergraph. Of course, every minimum lazy burning set is also minimal, but the reverse is not necessarily true. We denote the maximum length of a cover-sequence in $H_L$ as $\text{mcs}(L)$.

\begin{lemma}\label{lem:minimal_implies_cover}
Let $L$ be a Latin square of order $n$, and let $M$ be a minimal lazy burning set for $H_L$. We then have that $V(H_L)\setminus M$ contains at least one vertex in each row, column, and symbol class of $L$ and is, therefore, a cover of $L$.
\end{lemma}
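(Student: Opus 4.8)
The plan is to argue by contradiction: suppose $V(H_L)\setminus M$ misses an entire line, say a row-line $R$ (the argument for column- and symbol-lines is identical by the symmetry of the three roles). Then every entry of $R$ lies in $M$; that is, $R\subseteq M$. The key observation is that a whole line contained in $M$ is redundant for the burning process, because the hyperedge $R$ can never be the mechanism by which a vertex gets burned --- all $n$ of its vertices start burned, so it never has exactly one unburned vertex --- and, conversely, any entry $e\in R$ that is needed to burn some other vertex $v$ must do so through one of the two \emph{other} lines through $e$ (its column-line or its symbol-line).

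The main step is therefore to show that $M$ is not minimal by exhibiting a single vertex of $M$ whose removal still burns everything. Concretely, I would pick any entry $e=(r,c,s)\in R$ and consider $M'=M\setminus\{e\}$. I claim $M'$ still burns $H_L$. Run the burning process from $M'$; at some round, the column-edge through $c$ or the symbol-edge through $s$ will have every vertex burned except possibly $e$ --- indeed, since all of $R\setminus\{e\}$ is in $M'$ and hence burned from the start, and since the rest of the process from $M'$ proceeds at least as far as it would have with the extra vertex $e$ removed, one shows that every vertex other than $e$ still becomes burned (the vertex $e$ was never used to propagate into these vertices in a way that can't be recovered, because each such propagation goes through $e$'s column- or symbol-edge, into which the burning still arrives). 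Once all of $r$'s, $c$'s, and $s$'s other entries are burned, the row-edge $R$ (or the column-edge, or symbol-edge) has $e$ as its unique unburned vertex, so $e$ burns by propagation. Hence $M'$ is a lazy burning set, contradicting minimality of $M$. Therefore no line is entirely contained in $M$, so $V(H_L)\setminus M$ meets every row, column, and symbol, which is exactly the statement that it is a cover of $L$.

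The delicate point --- and the step I expect to be the main obstacle --- is the monotonicity claim that removing $e$ from the burning set does not obstruct the burning of any \emph{other} vertex. The naive "burning is monotone in the starting set" intuition is true, but here we need slightly more: we are removing a vertex and want to conclude it still gets re-burned at the end, which requires noticing that $e$'s only role could have been via the line $R$ (useless, as all its other vertices are already burned from $M'$) or via its column/symbol edges, and in the latter case the burning that $e$ would have transmitted is exactly the burning that will instead reach $e$ from $R$ once the rest of $c$ and $s$ are done. I would make this precise by a careful induction on the propagation rounds, tracking the set of burned vertices of the $M'$-process versus the $M$-process and showing the former eventually catches up on $V(H_L)\setminus\{e\}$, after which $e$ follows. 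An alternative, possibly cleaner, route is to invoke the dual/exchange structure: since $R\subseteq M$ and $|R|=n\ge 2$, the set $M$ "wastes" a coordinate, and any minimal burning set must be representable so that each vertex it contains is the last-added element needed for some line --- but a vertex $e\in R$ all of whose line-mates are also in $M$ can be swapped out. I would present whichever of these is shortest, but the induction-on-rounds version is the safe default.
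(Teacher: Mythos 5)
Your overall strategy is exactly the paper's: assume some line $R$ is entirely contained in $M$, delete one of its entries $e=(r,c,s)$, and show that $M'=M\setminus\{e\}$ is still a lazy burning set, contradicting minimality. Where your sketch goes astray is in how the deleted vertex and the remaining vertices get burned. The point you flag as the main obstacle evaporates once you observe that $e$ burns in the \emph{first} propagation round of the $M'$-process: all $n-1$ other vertices of the row-edge $R$ lie in $M'$ and are burned at time $0$, so $e$ is the unique unburned vertex of $R$ and burns immediately. After one round the burned set therefore contains all of $M$, and since propagation depends monotonically on the current burned set, the $M'$-process from then on dominates the $M$-process (with a delay of at most one round) and burns every vertex. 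That is the paper's entire argument.

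By contrast, the induction you propose --- show the $M'$-process ``catches up on $V(H_L)\setminus\{e\}$, after which $e$ follows'' --- is ordered backwards and would stall as stated: a vertex that burned in the $M$-process via $e$'s column- or symbol-edge needs $e$ itself to be burned before it can burn through that edge, so you cannot in general burn all of $V(H_L)\setminus\{e\}$ before re-burning $e$. Likewise, $e$ is not recovered ``once the rest of $c$ and $s$ are done'' but immediately, through $R$, precisely because $R\setminus\{e\}\subseteq M'$. The key ingredient (the row-edge $R$ re-burns $e$) is present in your write-up, but it must be invoked at the start rather than the end; with that reordering your argument collapses to the paper's short proof, and the ``delicate monotonicity'' step becomes the routine observation that a larger burned set propagates to a larger burned set.
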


\begin{proof}
Suppose for contradiction that this is not the case. 
We may then assume by symmetry that there is a row $r$ such that its line $\{(r,c',s') \in V(H_L)\}$ is a subset of $M$.  
Say vertex $(r,c,s)$ is one of these vertices, and note that if $M\setminus (r,c,s)$ was burned, then the vertex $(r,c,s)$ would be burned in the first propagation round since the hyperedge $\{(r,c',s') \in V(H_L)\}$ contains $n-1$ burned vertices. 
It follows that at least the vertices in $M$ will be burned after this one propagation round. 
Since $M$ was a lazy burning set, we then have that $M\setminus (r,c,s)$ is a lazy burning set, contradicting the minimality of $M$. 
This completes the proof. 
\end{proof}

As we have just shown, the complement of a minimal lazy burning set is always a cover, but not every cover can be ordered to form a cover-sequence; consider the trivial cover of every entry of $L$. However, we show below that the cover derived as the complement of a minimal lazy burning set can be ordered into a cover-sequence.

\begin{lemma}
\label{lem:lbs_implies_cover_seq} 
Let $L$ be a Latin square of order $n$, and let $M$ be a minimal lazy burning set for $H_L$. The vertices of $V(H_L)\setminus M$ can be ordered so that they form a cover-sequence for $L$.
\end{lemma}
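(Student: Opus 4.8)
The plan is to recover the cover-sequence order from the temporal order in which the vertices of $V(H_L)\setminus M$ get burned by propagation. Since $M$ is a lazy burning set, every vertex of $C := V(H_L)\setminus M$ is burned by propagation in some round; list the vertices of $C$ as $e_1, e_2, \ldots, e_d$ in the order they become burned, breaking ties within a round arbitrarily. By Lemma~\ref{lem:minimal_implies_cover}, $C$ is a cover of $L$, so condition (1) of the definition of a cover-sequence holds. It remains to check condition (2): for each $i$, the entry $e_i$ contains a row, a column, or a symbol not represented in $\{e_1, \ldots, e_{i-1}\}$.

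For condition (2), fix $i$ and suppose $e_i = (r,c,s)$ is burned by propagation in round $t$. Then there is a hyperedge (say a row-edge, the row-line of $r$) in which $e_i$ is the unique vertex not burned by the end of round $t-1$. So every \emph{other} entry in row $r$ is burned by the end of round $t-1$, meaning each such entry is either in $M$ or is some $e_j$ with $j < i$. The key observation is that $e_i$ itself is \emph{not} burned by the end of round $t-1$, so in particular $e_i$ is not one of the entries $e_1,\dots,e_{i-1}$; hence $e_i$ must be the unique representative of row $r$ among the entries of $\{e_1,\ldots,e_{i-1}\}\cup M$ that lie in row $r$ — wait, more carefully: the other $n-1$ entries of row $r$ are burned before $e_i$, but they may lie in $M$. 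I need row $r$ to be \emph{unrepresented} in $\{e_1,\ldots,e_{i-1}\}$, i.e.\ none of $e_1,\ldots,e_{i-1}$ lies in row $r$. This is exactly where minimality of $M$ must be invoked.

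So the main step — and the main obstacle — is the following claim: if $M$ is a \emph{minimal} lazy burning set and $e_i=(r,c,s)$ is burned in round $t$ because row $r$'s edge had $e_i$ as its last unburned vertex, then no $e_j$ with $j<i$ lies in row $r$. Equivalently, the first vertex of row $r$ to be burned by propagation triggers no earlier propagation from row $r$, and all of row $r$'s pre-$e_i$ burned vertices lie in $M$. To prove this, I will argue by contradiction using the same "reversibility" idea as in Lemma~\ref{lem:minimal_implies_cover}: suppose some $e_j$, $j<i$, lies in row $r$ and was burned before $e_i$. Consider removing an appropriately chosen vertex from $M$ — specifically, one of the $M$-vertices in row $r$ (there must be at least one, since row $r$'s edge eventually forces $e_i$ and row $r$ cannot be burned entirely by propagation through its own edge without an initial seed) — and show that the burning still completes, since that removed vertex will be re-burned once the rest of its row is burned; this contradicts minimality. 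Care is needed to make the timing argument rigorous (one should check that removing the vertex does not stall any other propagation), but the monotonicity of lazy burning — burning more vertices never hurts, and a vertex whose row, column, or symbol line is otherwise fully burned will itself burn — makes this routine. Once the claim is established, condition (2) follows immediately for the row case, and the column- and symbol-edge cases are identical by symmetry, completing the proof.
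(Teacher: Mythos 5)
Your construction orders $V(H_L)\setminus M$ in \emph{forward} burning order (earliest burned first), and your key claim --- that if $e_i$ is burned in round $t$ via a line $\ell$, then no earlier-burned complement vertex $e_j$, $j<i$, lies on $\ell$ --- is false, so the approach cannot be repaired along the lines you sketch. A counterexample is the paper's own introductory example: the cyclic Latin square of order $3$ with $M=\{(1,1,1),(1,2,2),(2,1,2)\}$, which is a minimum (hence minimal) lazy burning set. In round $1$ the entries $(1,3,3)$, $(3,1,3)$, $(3,3,2)$ burn; in round $2$ the entry $(2,2,3)$ burns, and the only line forcing it is the symbol-$3$ line, whose other two entries $(1,3,3)$ and $(3,1,3)$ are complement vertices burned strictly earlier. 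So the triggering line is already fully represented among earlier terms, and in fact one can check that \emph{no} tie-breaking of the forward order yields a cover-sequence here (whichever of the three round-$2$ vertices you place last has its row, column, and symbol all represented earlier). Your auxiliary argument also fails at the step ``there must be at least one $M$-vertex in row $r$'': a line can contain no seeds at all (row $2$, row $3$, and symbol $3$ in this example contain no vertex of $M$), since its entries may all burn via lines of the other two types. Minimality of $M$ is not the missing ingredient for condition (2); it is only needed, via Lemma~\ref{lem:minimal_implies_cover}, for condition (1).

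The correct move is to reverse your ordering: list the vertices so that the \emph{later} a vertex burns, the \emph{earlier} it appears in the sequence (ties arbitrary), which is what the paper does. Then if $v$ burns in round $t$ via some line $\ell$, every other vertex of $\ell$ is burned by the end of round $t-1$ and hence appears \emph{after} $v$ in the sequence (or lies in $M$), so $\ell$ is a line of $v$ not represented by earlier terms; equivalently, any vertex sharing a line with $v$ and preceding $v$ in the sequence is unburned at the end of round $t-1$, contradicting that $\ell$ had $n-1$ burned vertices. With that reversal the rest of your write-up (using Lemma~\ref{lem:minimal_implies_cover} for the cover property and arguing line-by-line by symmetry) goes through, and no appeal to minimality is needed for condition (2) --- which is exactly why the same argument later yields Theorem~\ref{lbs_weak_characterization} for arbitrary lazy burning sets.
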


\begin{proof}
Let $M$ be a minimal lazy burning set for $H_L$. 
Construct a sequence, $S$, from the vertices of $V(H_L)\setminus M$ so that the later a vertex is burned by propagation in the lazy burning process, the earlier it appears in the sequence, with ties ordered arbitrarily. 
We claim this sequence is a cover-sequence for $L$.

Suppose, by way of contradiction, that the constructed sequence $S$ is not a cover-sequence. By Lemma~\ref{lem:minimal_implies_cover}, we have that $V(H_L)\setminus M$ forms a cover for $L$. Thus, our sequence satisfies the first property of a cover-sequence, and hence it must fail the second property. Therefore, there is some vertex $v=(r,c,s)\in S$ such that row $r$, column $c$, and symbol $s$ are all represented by vertices earlier in the sequence. In particular, there are three vertices earlier in the sequence, say $x,y,z\in S$, each of which shares exactly one coordinate with $v$, say $x=(r,*,*)$, $y=(*,c,*)$, and $z=(*,*,s)$. 
Assume that $v$ was burned in round $t$ for some $t\geq 2$. By the definition of our ordering, vertices burned earlier must come later in the sequence, and so vertices $x$, $y$, and $z$ were not burned earlier than round $t$; in particular, $v$, $x$, $y$, and $z$ were not burned by the end of round $t-1$.

Consider the hyperedge that caused $v$ to be burned in round $t$. If $v$ burns due to the hyperedge corresponding to a given line, then at the end of round $t-1$, this hyperedge contained $n-1$ burned vertices. However, we know that both $v$ and exactly one of $x$, $y$, or $z$ belong to this hyperedge, and none of them were burned by the end of round $t-1$, so we have a contradiction. 
This completes the proof.
\end{proof}

\begin{figure}
\captionsetup[subfigure]{labelformat=empty}
    \null\hfill
    \subfloat[$S$]{\begin{tabular}{|c|c|c|c|c|}
    \hline
         \cellcolor{red!30}1 & 2 &3&4&5 \\ \hline
         2&4&1&5&3\\ \hline
         \cellcolor{red!30}3&\cellcolor{red!30}5&\cellcolor{red!30}4&\cellcolor{red!30}2&1 \\ \hline
         \cellcolor{red!30}4&\cellcolor{red!30}1&\cellcolor{red!30}5&\cellcolor{red!30}3&2 \\ \hline
         \cellcolor{red!30}5&\cellcolor{red!30}3&\cellcolor{red!30}2&\cellcolor{red!30}1&4 \\ \hline
    \end{tabular}}\hfill
    \subfloat[$M$]{\begin{tabular}{|c|c|c|c|c|}
    \hline
         \cellcolor{red!30}1 & 2 &\cellcolor{red!30}3&4&5 \\ \hline
         2&4&1&5&3\\ \hline
         \cellcolor{red!30}3&\cellcolor{red!30}5&\cellcolor{red!30}4&\cellcolor{red!30}2&1 \\ \hline
         \cellcolor{red!30}4&\cellcolor{red!30}1&\cellcolor{red!30}5&\cellcolor{red!30}3&2 \\ \hline
         \cellcolor{red!30}5&\cellcolor{red!30}3&\cellcolor{red!30}2&\cellcolor{red!30}1&4 \\ \hline
    \end{tabular}\hfill}\hfill
    \subfloat[$L \setminus M$]{\begin{tabular}{|c|c|c|c|c|}
    \hline
    & \cellcolor{blue!25}4& & \cellcolor{blue!25}1& \cellcolor{blue!25}5\\ \hline
    \cellcolor{blue!25}6& \cellcolor{blue!25}3& \cellcolor{blue!25}7& \cellcolor{blue!25}2&\cellcolor{blue!25}8 \\ \hline
    & & & &\cellcolor{blue!25}9 \\ \hline
    & & & &\cellcolor{blue!25}10\\ \hline
    & & & &\cellcolor{blue!25}11\\ 
    \hline
    \end{tabular}}
    \hfill\null
    \caption{A 5$\times 5$ Latin square with a minimum lazy burning set $S$, a non-minimum lazy burning set $M$, and a cover-sequence formed from $L \setminus M$, with the entries in blue numbered in their order in the sequence.}
\label{fig:cov_seq_example}
\end{figure}

We note that the minimality of $M$ in Lemma~\ref{lem:lbs_implies_cover_seq} is necessary; for example, consider the set $M=V(H_L)\setminus\{v\}$ for some vertex $v$. The set $M$ is a non-minimal lazy burning set for $H_L$; moreover, the set $V(H_L)\setminus M$ is not a cover for $L$ and hence cannot be ordered to form a cover-sequence.

Additionally, the converse of Lemma~\ref{lem:lbs_implies_cover_seq} is not necessarily true. 
In particular, there exist Latin squares $L$ and non-minimal lazy burning sets $M$ for $V(H_L)$ such that $V(H_L)\setminus M$ can be ordered to form a cover-sequence; see Figure~\ref{fig:cov_seq_example}. However, a partial converse to Lemma~\ref{lem:lbs_implies_cover_seq} holds, for which we introduce the following definition. Define a \emph{weak cover-sequence} of $L$ as a sequence of entries of $L$, $(e_1, e_2,\ldots, e_d)$, such that, for each $i$, $e_i$ contains a row, a column, or a symbol not represented in $\{e_1, e_2, \ldots, e_{i-1}\}$ (so $\{e_1, e_2,\ldots, e_d\}$ may or may not form a cover of $L$). We then see that any weak cover-sequence has a corresponding lazy burning set, which need not be minimal.

\begin{lemma}
\label{lem:cover_seq_implies_lbs} 
Let $L$ be a Latin square of order $n$. Choose $M\subseteq V(H_L)$ so that there exists an ordering of $V(H_L)\setminus M$ that is a weak cover-sequence. We then have $M$ is a lazy burning set for $H_L$.
\end{lemma}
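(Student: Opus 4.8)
The plan is to burn the entries of $V(H_L)\setminus M$ by propagation, taking them in the \emph{reverse} of their weak cover-sequence order. Write $V(H_L)\setminus M=(e_1,\dots,e_d)$ for the given weak cover-sequence. I would first deal with the last entry $e_d$: by the defining property of a weak cover-sequence, $e_d$ lies on some line $\ell$ (a row-line, column-line, or symbol-line) that is not represented among $e_1,\dots,e_{d-1}$. This means none of the other $n-1$ entries of $\ell$ belongs to $\{e_1,\dots,e_{d-1}\}$; since they also differ from $e_d$ itself, and since $V(H_L)=M\cup\{e_1,\dots,e_d\}$, all of them lie in $M$. Hence, once $M$ is burned in the first round, the hyperedge corresponding to $\ell$ has $e_d$ as its unique unburned vertex, so $e_d$ burns by propagation.

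The rest of the argument is an induction on the index in decreasing order. Suppose that eventually every vertex of $M\cup\{e_{i+1},\dots,e_d\}$ has burned. The entry $e_i$ lies on some line $\ell$ not represented among $e_1,\dots,e_{i-1}$, so the $n-1$ entries of $\ell$ other than $e_i$ avoid $\{e_1,\dots,e_{i-1}\}$; being entries of $L$, they must therefore lie in $M\cup\{e_{i+1},\dots,e_d\}$, and so they are all burned by hypothesis. The hyperedge corresponding to $\ell$ then forces $e_i$ to burn in the next round, if it has not already. Running this from $i=d$ down to $i=1$ shows that every $e_i$ eventually burns, so $M$ burns all of $H_L$; that is, $M$ is a lazy burning set. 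The degenerate case $M=V(H_L)$ is immediate.

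The argument is short, and I do not anticipate a serious obstacle. The one step deserving care is translating property (2) of a weak cover-sequence --- that ``$e_i$ contains a row, column, or symbol not represented among the earlier entries'' --- into the concrete statement that the remaining $n-1$ entries on that line all lie in $M\cup\{e_{i+1},\dots,e_d\}$, which is exactly the condition that makes the propagation step fire. Note that no covering or minimality hypothesis on $M$ is used; this is precisely why a weak cover-sequence already suffices for this direction, in contrast to the situation in Lemma~\ref{lem:lbs_implies_cover_seq}.
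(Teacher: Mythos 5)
Your proposal is correct and follows essentially the same argument as the paper: a reverse induction along the weak cover-sequence, using at each step the line not represented among earlier entries to conclude that all its other $n-1$ entries are already burned, so the corresponding hyperedge forces the current entry to burn. No gaps; the reasoning matches the paper's proof step for step.
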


\begin{proof}
Let $ u_1, u_2, \ldots, u_k$ be an ordering of $ V (H_L) \setminus M$ that gives a weak cover-sequence for $L$. We will show by induction that when $M$ is burned, each vertex $u_i$ will burn through propagation. 

The induction will occur in reverse order, so the base case is that $u_k$ will burn. The vertices that are not burning are exactly those in $\{u_1,u_2,\ldots,u_k\}$, and $u_k$ has at least one coordinate that is not shared by any of the $u_i$ with $i < k$. Suppose the \emph{row} coordinate of $u_k$ is not shared by any of the other $u_i$. No vertex $u_i$ with $i<k$ belongs to this row, which is to say that no unburned vertex belongs to this row except for $u_k$. Hence, $u_k$ burns due to a row hyperedge. The cases for column and symbol hyperedges are similar.

Assume for some $\ell\geq 0$ that $u_k,u_{k-1},\ldots,u_{k-\ell}$ have burned. We must prove that $u_{k-\ell-1}$ burns. First, observe that the only vertices that could be unburned at this point are $u_1,u_2,\ldots,u_{k-\ell-1}$. Also, $u_{k-\ell-1}$ has at least one coordinate that is not shared by any $u_i$ with $i<k-\ell-1$. Suppose it is the row coordinate. It follows that $u_{k-\ell-1}$ belongs to a row that does not contain any of $u_1,u_2,\ldots,u_{k-\ell-2}$. That is, all other vertices in this row either belong to $M$ or $\{u_{k-\ell}, u_{k-\ell+1},\ldots,u_k\}$. Since these vertices are burned, the row causes $u_{k-\ell-1}$ to burn. The column and symbol cases are similar.
\end{proof}

We thus obtain the following equivalence as a combination of Lemmas~\ref{lem:lbs_implies_cover_seq} and \ref{lem:cover_seq_implies_lbs}.

\begin{theorem}
\label{lbs_weak_characterization}
If $L$ is a Latin square of order $n$ and $M\subseteq V(H_L)$, then $M$ is a lazy burning set for $H_L$ if and only if the vertices of $V(H_L)\setminus M$ can be ordered so that they form a weak cover-sequence for $L$. 
\end{theorem}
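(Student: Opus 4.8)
The plan is to prove the two directions of the biconditional separately, treating Lemmas~\ref{lem:lbs_implies_cover_seq} and~\ref{lem:cover_seq_implies_lbs} as black boxes. The backward direction is immediate: if $V(H_L)\setminus M$ admits an ordering that is a weak cover-sequence for $L$, then Lemma~\ref{lem:cover_seq_implies_lbs} says verbatim that $M$ is a lazy burning set for $H_L$, so there is nothing further to do.

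For the forward direction, suppose $M$ is a lazy burning set for $H_L$. The subtlety is that $M$ need not be \emph{minimal}, so Lemma~\ref{lem:lbs_implies_cover_seq} cannot be invoked directly. I would first pass to a minimal lazy burning set: starting from $M$, repeatedly delete a vertex whenever the remaining set is still a lazy burning set; since $M$ is finite this process terminates at a set $M_0\subseteq M$ that is a minimal lazy burning set. By Lemma~\ref{lem:lbs_implies_cover_seq}, the vertices of $V(H_L)\setminus M_0$ can be ordered so as to form a cover-sequence for $L$, and in particular a weak cover-sequence.

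It then remains to descend from $M_0$ back to $M$. Since $M_0\subseteq M$, we have $V(H_L)\setminus M\subseteq V(H_L)\setminus M_0$, so the cover-sequence ordering of $V(H_L)\setminus M_0$ restricts to an ordering of $V(H_L)\setminus M$, namely the induced subsequence. The only thing to check is that this subsequence is again a weak cover-sequence, i.e., that the defining property~(2) of a weak cover-sequence is inherited by subsequences. This is elementary: deleting entries that precede a given entry $e_i$ can only shrink the set of rows, columns, and symbols represented among the entries before $e_i$, so if $e_i$ contributed a previously unrepresented coordinate in the original sequence, it still does so in the subsequence. Hence $V(H_L)\setminus M$ can be ordered as a weak cover-sequence, which completes the forward direction and the theorem.

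I do not anticipate a genuine obstacle; the work is entirely in noticing that Lemma~\ref{lem:lbs_implies_cover_seq} is phrased only for minimal lazy burning sets, so the ``shrink to $M_0$, then restrict the ordering'' maneuver together with the subsequence observation is exactly what removes the minimality hypothesis. As an alternative, one could instead re-examine the proof of Lemma~\ref{lem:lbs_implies_cover_seq}: its verification of property~(2) via the reverse-burn-time ordering never uses minimality, so that argument already yields a weak cover-sequence for arbitrary $M$; but routing through the subsequence remark keeps the write-up shorter.
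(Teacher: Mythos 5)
Your proof is correct. The backward direction coincides with the paper's: it is a direct invocation of Lemma~\ref{lem:cover_seq_implies_lbs}. For the forward direction, the paper takes exactly the route you relegate to your closing remark: it re-runs the reverse-burn-time ordering from the proof of Lemma~\ref{lem:lbs_implies_cover_seq}, observing that minimality was needed there only to secure the cover property (via Lemma~\ref{lem:minimal_implies_cover}), which a weak cover-sequence does not demand. Your primary route is genuinely different in mechanics, keeping Lemma~\ref{lem:lbs_implies_cover_seq} as a black box: shrink $M$ by greedy deletion to a minimal lazy burning set $M_0\subseteq M$, order $V(H_L)\setminus M_0$ as a cover-sequence, and pass to the induced subsequence on $V(H_L)\setminus M$. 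The two supporting observations you supply are both sound --- a minimal $M_0\subseteq M$ exists because deletion terminates on a finite set, and property~(2) is inherited by subsequences because deleting earlier entries only shrinks the set of rows, columns, and symbols represented before a given entry --- so the argument goes through, including the degenerate cases where the restricted sequence is empty. What your route buys is modularity: the lemmas are used as stated, with no need to reopen their proofs. What the paper's route buys is brevity: the reverse-burn-time verification of property~(2) applies verbatim to an arbitrary lazy burning set, so no auxiliary hereditary-subsequence step is needed.
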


\begin{proof}
The reverse direction is exactly Lemma~\ref{lem:cover_seq_implies_lbs}. For the forward direction, $V(H_L)\setminus M$ may or may not form a cover for $L$ since $M$ may or may not be minimal. The same argument as in the proof of Lemma~\ref{lem:lbs_implies_cover_seq} works here to show that $V(H_L)\setminus M$ can be ordered to form a weak cover-sequence, satisfying the second property of a cover-sequence.
\end{proof}

In particular, we note that if $L$ is a Latin square of order $n$ and $M$ is a minimum lazy burning set for $H_L$, then $|V(H_L)\setminus M|=\mathrm{mcs}(L)$. The following theorem characterizes the lazy burning number of $H_L$ in terms of $\mathrm{mcs}(L).$

\begin{theorem}
\label{thm:lazyburning_mcs_equality} If $L$ is a Latin square of order $n$, then $b_L(H_L) = n^2 - \mathrm{mcs}(L)$.
\end{theorem}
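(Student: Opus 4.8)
The plan is to combine Theorem~\ref{lbs_weak_characterization} with a short extremal observation that upgrades a weak cover-sequence to a genuine cover-sequence at maximum length. First I would record that $H_L$ has exactly $n^2$ vertices, so minimizing $|M|$ over lazy burning sets is the same as maximizing $|V(H_L)\setminus M|$. Writing $w(L)$ for the maximum length of a weak cover-sequence of $L$, I claim $b_L(H_L)=n^2-w(L)$. This is immediate from Theorem~\ref{lbs_weak_characterization}: given any weak cover-sequence $(u_1,\dots,u_\ell)$, its entries are automatically pairwise distinct (a repeated entry would have all three of its coordinates represented earlier, violating the defining property), so $M:=V(H_L)\setminus\{u_1,\dots,u_\ell\}$ is a lazy burning set of size $n^2-\ell$; conversely every lazy burning set arises in this way, since its complement admits such an ordering. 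As sequences of distinct entries have length at most $n^2$, the maximum defining $w(L)$ is over a finite nonempty set.

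It then remains to prove the purely design-theoretic identity $w(L)=\mathrm{mcs}(L)$. One inequality is free: every cover-sequence is in particular a weak cover-sequence, so $\mathrm{mcs}(L)\le w(L)$. For the reverse, I would argue by maximality. Let $(e_1,\dots,e_d)$ be a weak cover-sequence with $d=w(L)$. If $\{e_1,\dots,e_d\}$ failed to be a cover of $L$, then by symmetry some row $r$ would be unrepresented; choosing any entry $e$ of $L$ lying in row $r$, the extended sequence $(e_1,\dots,e_d,e)$ is again a weak cover-sequence, because its new final entry $e$ contains the row $r$, which is not represented among $e_1,\dots,e_d$. This has length $d+1$, contradicting the maximality of $d$. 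Hence a weak cover-sequence of maximum length is automatically a cover, i.e.\ a genuine cover-sequence, so $w(L)\le\mathrm{mcs}(L)$. Combining the two inequalities gives $w(L)=\mathrm{mcs}(L)$, and therefore $b_L(H_L)=n^2-\mathrm{mcs}(L)$.

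The only substantive point — everything else being bookkeeping — is recognizing that Theorem~\ref{lbs_weak_characterization} is stated for \emph{weak} cover-sequences whereas $\mathrm{mcs}(L)$ counts ordinary cover-sequences, and then observing that this gap disappears precisely at the extremal value: any shortfall in the covering condition can be repaired by appending a single entry from an uncovered line. I do not expect any real obstacle beyond making that observation; the distinctness of entries, the finiteness of the relevant maximum, and the arithmetic $b_L(H_L)=n^2-w(L)$ are all routine consequences of the characterization already established.
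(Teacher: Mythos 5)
Your proposal is correct. The paper gives no displayed proof of this theorem; its intended argument, signalled by the remark immediately preceding the statement, is to take a minimum (hence minimal) lazy burning set $M$, apply Lemma~\ref{lem:lbs_implies_cover_seq} to order $V(H_L)\setminus M$ into a genuine cover-sequence (so $n^2-b_L(H_L)\le \mathrm{mcs}(L)$), and apply Lemma~\ref{lem:cover_seq_implies_lbs} to the complement of a maximum cover-sequence for the reverse inequality. You instead work entirely with Theorem~\ref{lbs_weak_characterization}: introducing $w(L)$, the maximum length of a weak cover-sequence, you get $b_L(H_L)=n^2-w(L)$ directly from the characterization (together with the correct observation that entries of a weak cover-sequence are pairwise distinct), and you close the gap $w(L)=\mathrm{mcs}(L)$ by an extremal repair --- if a maximum-length weak cover-sequence missed a line, appending any entry of that line would lengthen it, a contradiction, so at the extremum the weak sequence is a true cover-sequence. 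The substantive difference is where the cover property is secured: the paper extracts it from \emph{minimality of the burning set} (Lemma~\ref{lem:minimal_implies_cover}, deleting a vertex of $M$ lying on a fully burned line), whereas you extract it from \emph{maximality of the sequence}; these are dual extremal arguments. Your route buys a shorter derivation that bypasses Lemmas~\ref{lem:minimal_implies_cover} and~\ref{lem:lbs_implies_cover_seq} once Theorem~\ref{lbs_weak_characterization} is available, while the paper's route yields the slightly stronger structural fact that the complement of \emph{every} minimal (not only minimum) lazy burning set can be ordered into a genuine cover-sequence.
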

Therefore, bounds on $\mcs(L)$ will give corresponding bounds on $b_L(H_L)$, transforming the lazy burning of a Latin square into one of finding cover-sequences. However, determining the exact value of $\mcs(L)$ can be somewhat unwieldy. We approach the idea of finding maximal cover-sequences from a new angle, introducing a novel variant of the \emph{chain} of subsquares of a Latin square $L$, which is a collection of subsquares of $L$, say $S_0, S_1,\ldots, S_\alpha$, such that $\emptyset = S_0 \subseteq S_1 \subseteq \cdots \subseteq S_\alpha = L$, where $S_1$ is the trivial $1\times 1$ square consisting of a single cell. 

Although chains of Latin squares have not yet been well studied in general, the study of subsquares in Latin squares is a topic of considerable interest. The expected number of subsquares in a random Latin square were studied in \cite{subsq_in_rectangles, canonical_labellings,MR4488316,Kwan_substructures, MR1685535}, and include important connections to canonical labelings of Latin squares in \cite{canonical_labellings}. Latin squares with disjoint subsquares of prescribed sizes have been studied \cite{realization_five, realization_hypercubes}, as have Latin squares where each entry occurs in a proper subsquare \cite{Valle_Dukes}.
Subgroups of groups correspond to subsquares of the corresponding Caley Table of the Latin square, and many interesting connections exist here; see \cite{many_maximal_partial_transversal}.  

We call a chain \emph{connected} if for each $i$ with $1 \leq i \leq \alpha$, there is a line $\ell$ in $S_{i}$ but not in $S_{i-1}$ so that $S_{i}$ is the smallest subsquare intersecting $\ell$ and containing all the entries of $S_{i-1}$. The \emph{length} of a chain is one less than the chain's cardinality, and we denote the length of the smallest connected chain as $\text{scc}(L)$. This quantity is related to $\mcs(L)$, as we show below.

\begin{theorem} \label{thm:mcs_equality_FuncOf_stc}
For a Latin square $L$ of order $n$, $\mathrm{mcs}(L) = 3n - 1 - \mathrm{scc}(L)$.
\end{theorem}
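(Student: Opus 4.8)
The plan is to prove the identity $\mathrm{mcs}(L) = 3n-1-\mathrm{scc}(L)$ by establishing the two inequalities separately, translating between cover-sequences and connected chains. The key bookkeeping device is that a cover-sequence $(e_1,\ldots,e_d)$ of $H_L$ naturally induces a nested family of sets of lines: for each prefix $\{e_1,\ldots,e_i\}$, let $T_i$ be the set of all lines (rows, columns, symbols) represented among $e_1,\ldots,e_i$. Property (2) of a cover-sequence says $|T_i| > |T_{i-1}|$ for every $i$, so $|T_i| \geq i$, while property (1) says $T_d$ is all $3n$ lines; since $T_1$ already has size $3$ (the first entry hits one row, one column, one symbol), we get $d \leq 3n-1-\mathrm{scc}(L)$ only after we relate the "slack" in the growth of $|T_i|$ to a connected chain. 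The crucial structural observation to make precise is that when $|T_i| = |T_{i-1}| + 1$ — the slow case — the entry $e_i$ together with the subsquare "generated" by $\{e_1,\ldots,e_{i-1}\}$ forces a new subsquare in a way that mirrors the connectedness condition; whereas when $|T_i|$ jumps by $2$ or $3$, no new subsquare constraint is triggered. So I would define, along the cover-sequence, the subsquares $S_j$ = smallest subsquare of $L$ containing $\{e_1,\ldots,e_{i}\}$ at the indices $i$ where a jump of size $1$ occurs, and argue this yields a connected chain whose length counts exactly the number of such slow steps.

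First I would prove $\mathrm{mcs}(L) \leq 3n-1-\mathrm{scc}(L)$. Take a maximum cover-sequence of length $d = \mathrm{mcs}(L)$. Track $t_i := |T_i|$; then $t_1 = 3$, $t_d = 3n$, and $t_i - t_{i-1} \in \{1,2,3\}$ for $i \geq 2$. Hence $3n - 3 = \sum_{i=2}^d (t_i - t_{i-1}) \geq d - 1$, with the deficit $(3n-3) - (d-1)$ equal to $\sum_{i \geq 2}(t_i - t_{i-1} - 1)$. The heart of the argument: I claim this deficit is at least $\mathrm{scc}(L)$. For this, I would show that from the cover-sequence one can extract a connected chain $\emptyset = S_0 \subseteq S_1 \subseteq \cdots \subseteq S_\alpha = L$ where $\alpha$ equals the number of "big" steps (those with $t_i - t_{i-1} \geq 2$); equivalently, each time the line-count jumps by more than one, a genuinely new subsquare is completed, and between such jumps the partial entry-set stays inside a fixed subsquare. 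Then $\mathrm{scc}(L) \leq \alpha \leq \sum_{i\geq 2}(t_i - t_{i-1}-1) = 3n-2-d$, which rearranges to $d \leq 3n-1-\mathrm{scc}(L) - 1$... so I must be careful with the off-by-one: more likely the correct accounting is $\alpha = $ (number of big steps) and each big step of size $k$ contributes $k-1$ to both the deficit and to "how far" the chain advances, giving $\mathrm{scc}(L) \leq \alpha$ and deficit $\geq \alpha$, hence $d = 3n-2 - \text{deficit} + 1 \leq 3n-1-\mathrm{scc}(L)$. I would pin down the exact correspondence by an explicit induction on the cover-sequence, maintaining the invariant that the smallest subsquare containing the current prefix is the current top of a connected chain.

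Next, for $\mathrm{mcs}(L) \geq 3n-1-\mathrm{scc}(L)$, I would start from a smallest connected chain $\emptyset = S_0 \subseteq S_1 \subseteq \cdots \subseteq S_\alpha = L$ with $\alpha = \mathrm{scc}(L)$ and build a cover-sequence of length $3n-1-\alpha$ by going up the chain one link at a time. For each step from $S_{i-1}$ to $S_i$, the connectedness condition supplies a line $\ell_i$ in $S_i \setminus S_{i-1}$ such that $S_i$ is the smallest subsquare through $\ell_i$ containing $S_{i-1}$; I would order the new entries of $S_i$ so that each successive entry introduces at least one previously-unseen line (possible because within a Latin square of order $m$, a cover-sequence of length $3m-2$ exists — this is essentially the order-$m$ base case, built by picking entries greedily), being careful that the very first entry of the whole sequence contributes $3$ new lines and each subsequent entry at least $1$, and that passing from $S_{i-1}$ to $S_i$ "wastes" exactly one potential new line per... again an off-by-one I'd nail down. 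Counting: $S_i$ has some order $m_i$, contributes $m_i^2 - m_{i-1}^2$ new entries and $3m_i - 3m_{i-1}$ new lines, and I want the number of entries used to be (new lines) $-$ (number of sub-steps), summing to $3n - 3 - (\alpha - 1) = 3n - 1 - \alpha$ once the initial $+2$ from the first entry is included.

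The main obstacle I anticipate is the structural lemma at the core of the upper bound: showing that in a maximum cover-sequence, a line-count jump of size $\geq 2$ genuinely forces the completion of a new subsquare and that consecutive entries between jumps remain confined to one subsquare — in other words, that the "greedy generation" of subsquares along the cover-sequence is both well-defined and produces a *connected* chain in the technical sense (each link being the *smallest* subsquare through a new line). Establishing that minimality condition, rather than just nestedness, is where the argument will need genuine Latin-square structure — presumably the fact that the smallest subsquare containing a set of entries is obtained by repeatedly closing under "if two of the three coordinates of a would-be entry are already forced, the entry is forced" — and matching the combinatorics so the two off-by-ones cancel cleanly on both sides.
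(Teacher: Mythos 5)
Your overall architecture matches the paper's (two inequalities, weight/line-count bookkeeping, translating between cover-sequences and connected chains), but both halves are missing the structural steps that make the bookkeeping valid, and the upper bound — which you yourself flag as "the main obstacle" — is genuinely unresolved. First, the correspondence you propose between jump sizes and chain links is wrong in both versions you state. It is not the slow (weight-$1$) steps that force new subsquares: an entry sharing two already-represented lines necessarily lies inside the smallest subsquare containing the prefix, so weight-$1$ entries never advance the chain. Nor is it true that every jump of size $\geq 2$ starts a new link: in $\mathcal{I}^2$ the prefix $(00,00,00),(01,01,00),(10,10,00)$ already generates the whole square, yet $(11,11,00)$ still adds two new lines. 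What is actually needed is only an inequality (every entry that leaves the current smallest containing subsquare meets it in at most one line, hence has weight at least $2$), together with the claim that the greedily generated chain is \emph{connected}, i.e.\ each $L_i$ is the smallest subsquare through some new line containing $L_{i-1}$. Connectedness is exactly what can fail for an unmodified maximum cover-sequence: if the first entry outside $L_{i-1}$ has weight $3$, it may share no line with $L_{i-1}$, and then the smallest subsquare containing $L_{i-1}$ and meeting a line of that entry need not contain the entry at all. The paper's proof closes this hole by a length-preserving normalization of the maximum cover-sequence (inserting weight-$2$ and weight-$1$ entries and deleting entries whose weight drops to $0$) so that the first entry is the unique weight-$3$ entry, and further closing each prefix under the Latin property so the lines represented before each chain-advancing entry are exactly the lines of the previous subsquare; your proposal has no mechanism playing this role. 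Your arithmetic patch is also off: the deficit equals $3n-2-d$ exactly, and the correct accounting is (chain length) $\leq$ (deficit) $+1$, which is what yields $d \leq 3n-1-\mathrm{scc}(L)$; the identity $d = 3n-2-\text{deficit}+1$ you write is false.

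In the lower-bound half you follow the paper's route (one weight-$3$ entry, one weight-$2$ entry per chain link, weight-$1$ fillers), but the step you defer — ordering the new entries of $S_i$ so that each adds a fresh line until all $3m_i$ lines of $S_i$ are covered — is precisely where the connectedness of the chain must be invoked, and the justification you offer ("within a Latin square of order $m$, a cover-sequence of length $3m-2$ exists") is false: the maximum possible length is $3m-3$, as the theorem itself implies, and in any case it is not the statement needed. The paper instead takes a \emph{maximal} run of weight-$1$ extensions and argues that if it stalled before exhausting the lines of $S_i$, the rows, columns, and symbols covered so far would themselves form a subsquare containing $S_{i-1}$ and meeting $\ell_i$, contradicting that $S_i$ is the smallest such subsquare. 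So while your plan is pointed in the right direction, both inequalities still require the genuine Latin-square arguments (sequence normalization for the upper bound; the minimality/saturation argument for the lower bound) that your proposal leaves open.
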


We prove Theorem~\ref{thm:mcs_equality_FuncOf_stc} in two lemmas. Additionally, we first define a useful property of elements of a weak cover-sequence:  the \emph{weight} of an entry $e_i$ in a weak cover-sequence is the number of lines $\ell$ such that $e_i\in \ell$ and $e_j \cap \ell = \emptyset$ for all $j< i$. Informally, it is the number of unique lines that $e_i$ adds to the weak cover-sequence, and thus, for a proper cover-sequence, we see that the weight of all of its entries must sum to exactly $3n$.

We begin with the lower bound of Theorem~\ref{thm:mcs_equality_FuncOf_stc}.

\begin{lemma} \label{lem:mcs_bounded_below_stc}
For a Latin square $L$ of order $n$, $\mathrm{mcs}(L) \geq 3n - 1 - \mathrm{scc}(L)$.
\end{lemma}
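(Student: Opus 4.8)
\section*{Proof proposal for Lemma~\ref{lem:mcs_bounded_below_stc}}

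The plan is to exhibit the quantity $3n-1-\mathrm{scc}(L)$ as the length of an explicit cover-sequence built from a shortest connected chain. Fix a connected chain $\emptyset = S_0 \subsetneq S_1 \subsetneq \cdots \subsetneq S_\alpha = L$ with $\alpha = \mathrm{scc}(L)$, write $m_i = |S_i|$ and $d_i = m_i - m_{i-1}$, and for each $i$ with $2\le i\le\alpha$ let $\ell_i$ be a line witnessing that the chain is connected at step $i$, so that $S_i$ is the smallest subsquare containing the entries of $S_{i-1}$ and meeting $\ell_i$. I will build the cover-sequence in $\alpha$ consecutive blocks: block $1$ is the single entry of $S_1$, and for $2\le i\le\alpha$ block $i$ will consist of $3d_i-1$ entries of $S_i\setminus S_{i-1}$, all of whose coordinates are lines of $S_i$, chosen so that immediately after block $i$ is appended the set of lines represented by the sequence so far is exactly $\mathcal{L}(S_i)$. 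Granting this, every appended entry adds at least one new line, so the concatenation is a genuine cover-sequence, and its length telescopes:
\[
1 + \sum_{i=2}^{\alpha} (3d_i - 1) = 1 + 3(m_\alpha - m_1) - (\alpha - 1) = 3n - 1 - \mathrm{scc}(L),
\]
which gives $\mathrm{mcs}(L) \ge 3n - 1 - \mathrm{scc}(L)$.

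To construct block $i$, assume inductively that the lines covered so far are precisely those of $S_{i-1}$. I first append one entry of weight $2$ that covers $\ell_i$: if $\ell_i$ is a new column or a new symbol of $S_i$, such an entry exists among the cells lying in a row of $S_{i-1}$ and a column of $S_i\setminus S_{i-1}$ (whose symbols are exactly the new symbols of $S_i$), while if $\ell_i$ is a new row one uses a cell in that row and a column of $S_{i-1}$; in each case the third coordinate is a line of $S_{i-1}$, so the entry has exactly two new coordinates, one of them $\ell_i$. After this I repeatedly append any entry of $S_i$ that, relative to the lines covered so far, contains exactly one uncovered line (a weight-$1$ step), stopping only when no such entry exists. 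Each such step necessarily uses an entry of $S_i\setminus S_{i-1}$ and covers exactly one new line, so if the process runs until all of $\mathcal{L}(S_i)$ is covered it will have used exactly $3d_i-1$ entries (two new lines from the first entry, one from each later one), as required.

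The crux, and the step I expect to be the main obstacle, is showing that this greedy process cannot halt while some line of $S_i$ is still uncovered. Suppose it halts with nonempty uncovered set $W = W_R \sqcup W_C \sqcup W_\Sigma$ (new rows, columns, symbols of $S_i$ not yet covered) and let $R^+, C^+, \Sigma^+$ be the complementary covered rows, columns, symbols of $S_i$. The halting condition says that every entry of $S_i$ has either $0$ or at least $2$ coordinates in $W$; applied to the cells of $R^+\times C^+$ this forces all of their symbols into $\Sigma^+$, and counting symbols along rows and along columns of this sub-array yields $|\Sigma^+|\ge|C^+|$ and $|\Sigma^+|\ge|R^+|$. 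By the symmetry of the three kinds of lines in a Latin square (passing to the conjugate squares that permute rows, columns and symbols), the analogous inequalities hold with the roles of $R^+, C^+, \Sigma^+$ permuted, whence $|R^+| = |C^+| = |\Sigma^+|$ and, by the standard fact that $k$ rows and $k$ columns whose cells use only a fixed set of $k$ symbols induce a subsquare, $R^+\times C^+$ is a subsquare $S^+$ with symbol set $\Sigma^+$. Now $S^+$ contains $S_{i-1}$; it is strictly smaller than $S_i$ because $W\neq\emptyset$; and since $\ell_i$ was covered by the first entry of the block we have $\ell_i\notin W$, so $\ell_i$ is a line of $S^+$ and in particular $S^+\supsetneq S_{i-1}$. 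Thus $S^+$ is a subsquare with $S_{i-1}\subsetneq S^+\subsetneq S_i$ meeting $\ell_i$, contradicting the minimality of $S_i$ in the connected chain. Hence every block can be completed, and the construction together with the telescoping count above finishes the proof.
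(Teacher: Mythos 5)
Your proposal is correct and follows essentially the same route as the paper: build the cover-sequence block-by-block from a shortest connected chain, with one weight-3 entry, one weight-2 entry per chain step lying on the witnessing line $\ell_i$, and a maximal/greedy run of weight-1 entries, using the minimality of $S_i$ (the covered lines would otherwise form a smaller subsquare containing $S_{i-1}$ and meeting $\ell_i$) to show each block covers exactly $\mathcal{L}(S_i)$, then counting. Your block-size telescoping is just a repackaging of the paper's weight count, and your halting/conjugate-counting argument fills in details the paper treats more tersely.
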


\begin{proof}
Let $S_0,S_1, \ldots, S_\alpha$ be a smallest connected chain of $L$. Let $e_1$ be the unique entry in $S_1$. For each $i$ with $2 \leq i \leq \alpha$, let $\ell_i$ be a line  in $S_{i}$ but not in $S_{i-1}$ so that $S_{i}$ is the smallest subsquare intersecting $\ell_i$ and containing all the entries of $S_{i-1}$. Define $e_i \in S_i\setminus S_{i-1}$ such that $e_i$ is in line $\ell_i$ and is also in one other line that intersects $S_{i-1}$, and define $F_i$ to be a maximal sequence of vertices $(f_i^1, f_i^2, \ldots)$ such that $f_i^j$ has exactly two lines that intersect $\{f_i^1, f_i^2,\ldots, f_i^{j-1}\}\cup \bigcup_{i'<i} \left( \{e_{i'}\} \cup F_{i'}\right)$. 

We claim that $(e_1) F_1 (e_2) \cdots (e_i) F_i$ is a cover-sequence of $S_i$, which only requires us to show that the set of its vertices is a cover. Let $R,C,S$ be the rows, columns, and symbols in the proposed cover-sequence of $S_i$. The symbols in rows $R$ and columns $C$ must each be in $S$, or else $F_i$ could be extended, and so would not be maximal. Similarly, the rows in columns $C$ that contain a symbol in $S$ must be rows in $R$, and similarly, the columns in rows $R$ that contain a symbol in $S$ must be columns in $C$. These $R$, $C$, and $S$, therefore, define exactly the rows, columns, and symbols of a subsquare that intersects line $\ell_i$ and, by induction, contains $S_{i-1}$ as a subsquare. Since $S_i$ is the smallest subsquare of $L$ that satisfies this condition, each row, column, and symbol of $S_i$ is represented in the sequence $(e_1) F_1 (e_2) \cdots (e_i) F_i$. This sequence only contains entries in $S_i$, so it must be a cover-sequence for $S_i$. Using this inductive step until $i=\alpha$, it then follows that $(e_1) F_1 (e_2) \cdots (e_\alpha) F_\alpha$ must be a cover-sequence for $S_\alpha$. 
 
We may now treat the sequence constructed as a cover-sequence. We have that $e_1$ has weight $3$. For $2 \leq i\leq \alpha$, by definition, $e_i$ shares exactly one line with $S_{i-1}$. The subsequence $(e_1) F_1 (e_2) \cdots (e_{i-1}) F_{i-1}$ intersects each line in $S_{i-1}$. It follows that $e_i$ shares exactly one line with earlier vertices in the sequence. Hence, the weight of $e_i$ is exactly $2$. All vertices in the sequences $F_i$ share two lines with earlier vertices in the sequence and so have weight $1$. 

The sum of the weights must equal $3n$ since this is the number of lines that must be covered, and each line contributes a value of $1$ to the weight of exactly one vertex. We have one vertex of weight $3$, a further $\alpha-1$ vertices of weight $2$, which are the vertices $e_2, e_3, \ldots, e_\alpha$, and the remaining $\beta$ vertices have weight $1$. We then find that $3n = 3 + 2 (\alpha-1) + \beta$. The length of the sequence is $1 + (\alpha-1) + \beta$, and subbing in $\beta=3n-3-2(\alpha-1)$, we find this length is $1+(\alpha-1)+(3n-3-2(\alpha-1)) = 3n - \alpha -1$. Since $\alpha=\mathrm{scc}(L)$, it follows that the length of the sequence is $3n-1-\mathrm{scc}(L)$. Since $\text{mcs}(L)$ is the maximum size of such a sequence, it follows that $\text{mcs}(L) \geq 3n - 1 - \text{scc}(L)$, which completes the proof. 
\end{proof}

We next derive the upper bound of Theorem~\ref{thm:mcs_equality_FuncOf_stc}.

\begin{lemma} \label{lem:mcs_bounded_above_stc}
For a Latin square $L$ of order $n$, $\mathrm{mcs}(L) \leq 3n - 1 - \mathrm{scc}(L)$.
\end{lemma}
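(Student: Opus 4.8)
The plan is to prove the reverse inequality by starting from a maximum cover-sequence and extracting from it a connected chain of subsquares whose length is small. Let $(g_1, g_2, \ldots, g_m)$ be a cover-sequence of $L$ with $m = \mathrm{mcs}(L)$. Treating it as a weak cover-sequence, every entry $g_i$ has a well-defined weight; since $\{g_1,\ldots,g_m\}$ is a genuine cover, the weights sum to $3n$. The first entry $g_1$ has weight $3$, and each subsequent entry has weight $1$, $2$, or $3$. I would first argue that no entry past the first can have weight $3$: if $g_i$ ($i \geq 2$) contributed three new lines, then $g_i$'s row, column, and symbol are all disjoint from the earlier entries, but then the prefix $\{g_1,\ldots,g_{i-1}\}$ together with the suffix still needs to cover those three lines using only entries of weight at most $\ldots$ — actually the cleaner route is to note that a weight-$3$ entry after the start would force the final cover to reach $3n$ lines with ``too few'' entries, contradicting maximality; I would make this precise by comparing with the lower-bound construction. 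Having excluded weight-$3$ entries after the first, write $a$ for the number of weight-$2$ entries and $b$ for the number of weight-$1$ entries, so $3 + 2a + b = 3n$ and $m = 1 + a + b$, giving $m = 3n - 1 - a$. Thus it suffices to show $a \geq \mathrm{scc}(L)$, i.e., to build a connected chain of length $a$ (or smaller).

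The key construction is this: walk through the cover-sequence and, each time a weight-$2$ entry $g_i$ is encountered, record the subsquare $T_j$ generated by all entries seen so far (i.e., the smallest subsquare of $L$ containing $\{g_1, \ldots, g_i\}$, equivalently the subsquare on the rows/columns/symbols represented so far — here I would use the standard fact, essentially proved inside Lemma~\ref{lem:mcs_bounded_below_stc}, that the closure of a set of lines under ``the symbol forced in a covered row--column cell must be covered'' and its permutations yields a subsquare). Let $\emptyset = T_0 \subseteq T_1 \subseteq \cdots$ be this sequence of subsquares, with $T_1$ the single cell $g_1$ and one new $T_j$ per weight-$2$ entry, so there are $1 + a$ of them; I claim the last one is all of $L$ because the full sequence is a cover. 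The main point is that this chain is \emph{connected}: when we pass from $T_{j-1}$ to $T_j$, the weight-$2$ entry $g_i$ introduces exactly one new line $\ell$ (its other two lines already meet earlier entries, hence meet $T_{j-1}$ — this uses that weight-$1$ entries in between add no new lines, so between consecutive weight-$2$ entries the set of covered lines is unchanged and equals the line-set of $T_{j-1}$). Then $T_j$ contains $\ell$, contains all of $T_{j-1}$, and is the \emph{smallest} such subsquare: any subsquare containing $T_{j-1}$ and meeting $\ell$ must contain all the lines covered up through $g_i$ (by the same closure argument), hence contains $T_j$. That is exactly the definition of a connected chain, so $\mathrm{scc}(L) \leq 1 + a - 1 = a$, and therefore $\mathrm{mcs}(L) = 3n - 1 - a \leq 3n - 1 - \mathrm{scc}(L)$.

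I would organize the write-up as: (1) set up the weights and the count $m = 3n - 1 - a$ after disposing of post-initial weight-$3$ entries; (2) prove the closure lemma identifying ``set of covered lines'' with ``line-set of the generated subsquare'', and note covered lines only change at weight-$2$ and weight-$3$ entries; (3) define the chain $T_0 \subseteq \cdots$ and verify $T_0 = \emptyset$, $T_1$ is a single cell, and the top is $L$; (4) verify connectedness by the smallest-subsquare argument above; (5) conclude. Steps (1)–(3) and (5) are bookkeeping; the real content is (4), the minimality part of connectedness.

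The hard part will be pinning down the closure argument cleanly — showing that the set of lines represented by a prefix of the cover-sequence is precisely the line-set of a subsquare, and that a weight-$2$ entry adds its one new line in a way that the resulting generated subsquare is genuinely the \emph{minimal} one extending the previous subsquare through that line. One has to be careful that ``generated subsquare'' is well-defined (closing a partial set of rows/columns/symbols under the Latin condition does terminate in a subsquare, which is where the subsquare literature cited in the paper is relevant) and that intermediate weight-$1$ entries, though they may have weight $1$, could a priori start a new ``branch'' that the minimality claim must rule out — the fix is precisely the observation that weight-$1$ entries leave the covered-line-set (hence the generated subsquare) unchanged, so all the action is concentrated at the weight-$2$ entries. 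I also need the strict inequality $a \geq \mathrm{scc}(L)$ to come out with the right off-by-one, which the explicit count $3 + 2a + b = 3n$, $m = 1 + a + b$ handles automatically.
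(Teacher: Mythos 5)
The fatal step is your claim (1), that a maximum cover-sequence has no weight-$3$ entry past the first, which you propose to justify by ``a later weight-$3$ entry forces $3n$ lines to be covered by too few entries, contradicting maximality.'' That claim is false, so no amount of making it precise will work. Concretely, take $L=\mathcal{I}^2$, the Cayley table of $\mathbb{Z}_2^2$ with rows, columns, and symbols labelled by $\{0,1,2,3\}$ and $s=r\oplus c$; here $\mathrm{scc}(L)=3$ and $\mathrm{mcs}(L)=3n-1-\mathrm{scc}(L)=8$. The sequence $(0,0,0),(1,2,3),(0,2,2),(1,0,1),(2,0,2),(3,0,3),(0,1,1),(0,3,3)$ is a cover-sequence of length $8=\mathrm{mcs}(L)$ whose first \emph{two} entries both have weight $3$. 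So maximality does not control the weight profile: with $t\ge 1$ weight-$3$ entries and $a$ weight-$2$ entries your count becomes $m=3n-2t-a$, not $3n-1-a$, and your chain construction yields nothing usable at a later weight-$3$ entry (your connectedness argument needs the new entry to share a line with the current subsquare, which a weight-$3$ entry need not do). This is exactly the point where the paper does real work: it \emph{modifies} the maximum cover-sequence, inserting after $v_1=(1,1,1)$ an entry such as $(1,c,s')$ to downgrade each extra weight-$3$ entry $(r,c,s)$ to weight $2$, deleting any entry whose weight drops to $0$; the length does not decrease, so one may assume a unique weight-$3$ entry before counting. Your proposal needs either this surgery or a separate argument that a weight-$3$ jump can be absorbed into at most two connected chain steps; as written it has neither.

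A secondary problem is your ``closure lemma'': the set of lines covered by a prefix is in general \emph{not} the line-set of the generated subsquare (in a subsquare-free Latin square, after two entries only five lines are covered while the generated subsquare is all of $L$), and ``weight-$1$ entries add no new lines'' is false as stated --- they add exactly one new covered line. What is true, and what rescues your step (4), is weaker: every covered line meets the current generated subsquare $T_{j-1}$, so a weight-$1$ entry has two of its lines among the lines of $T_{j-1}$ and hence lies inside $T_{j-1}$ (two lines of a subsquare determine a cell of it), leaving the generated subsquare unchanged; and for a weight-$2$ entry $g_i$, any subsquare containing $T_{j-1}$ and meeting the one line of $g_i$ not in $T_{j-1}$ already has two lines of $g_i$, hence contains the cell $g_i$ itself and therefore contains $T_j$, which gives connectedness directly without any identification of covered lines with subsquare lines. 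The paper instead performs a second sequence modification (inserting or moving entries of weight $1$) so that prefix-covered lines literally form subsquares; either route works, but your write-up as proposed rests on the false identification. The weight-$3$ issue, however, is the genuine gap.
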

\begin{proof}
Let $E=(v_1, v_2,\ldots, v_d)$ be a cover-sequence of length $d = \mathrm{mcs}(L)$. 
By making small changes to $E,$ we will construct a cover sequence with a length of $d$ and several nice properties. 
We will then use this modified cover-sequence to construct a connected chain of length $3n-1-d$.
Since this connected chain must have length at least $\mathrm{scc}(L)$ and the modified cover-sequence has length at most $\mathrm{mcs}(L)$, we have that $scc(L) \leq 3n-1-d = 3n-1-mcs(L)$, and the result follows. 

By permuting the entries of $L$, we may assume that $v_1=(1,1,1)$, which has weight $3$ in the original cover-sequence. 
If some other entry $v_i = (r,c,s)$ of the cover-sequence had weight $3$, then we could insert the entry $(1,c,s')$ into the sequence, directly after $v_1$, where $s' \notin \{s, 1\}$. 
This new entry in the sequence must have a weight of $2$, and its insertion would reduce the weight of $v_i$ from $3$ to $2$. 
There must also be a unique entry $v_j$ in the sequence that represents the symbol line for symbol $s'$. 
The weight of $v_j$ will drop by $1$ by the addition of $(1,c,s')$. 
If the weight of $v_j$ drops from $1$ to $0$, we must remove $v_j$ from the sequence to ensure the sequence is still a cover-sequence. 
Whether $v_j$ was removed from the sequence or not, we are left with a sequence that is not shorter than the original sequence but with one less vertex of weight $3$.
This argument shows that all entries of weight $3$ can be eliminated in this fashion except for one, which is the first vertex in the cover-sequence.

We may then assume that the only entry of the cover-sequence $(v_1, v_2,\ldots, v_d)$ of weight $3$ is the first element. 
We define $e_1=v_1$ and $L_1=\{v_1\}$. 
Recursively, let $e_i$ be the first entry of the cover-sequence that is not in the subsquare $L_{i-1}$ and define $L_i$ as the smallest subsquare of $L$ that contains the entries of $L_{i-1}$ as well as the entry $e_i$. 
Define $R_i,C_i$, and $S_i$ as the rows, columns, and symbols in $L_i$. 

Suppose that there is a row $r$ and column $c$ that is represented in the cover-sequence before $e_i$ but where the symbol $s$ with $(r,c,s)\in L$ is represented in an entry in the cover-sequence at or after $e_i$, say in the sequence element $v_j$. 
If $v_j = (r,c,s)$, then we may move it earlier in the sequence so that it occurs before $e_i$.
Otherwise, we can add entry $(r,c,s)$ to the cover-sequence before $e_i$ but after the sequence elements that represent $r$ and $c$. 
In both cases, the weight of the cover-sequence element $(r,c,s)$ is $1$. 
In the latter case, it might be that $v_j$ now has weight $0$, in which case we remove it from the cover sequence. This operation with $(r,c,s)$ does not decrease the length of the cover sequence.
We may recursively repeat this procedure and a similar process, but in which only the column or row is represented at or after $e_i$. 
This procedure only terminates when for each row $r$ and column $c$ represented before $e_i$, the corresponding symbol in row $r$ and column $c$ of the Latin square is also represented before $e_i$, and similarly when the roles of row, columns, and symbols are switched. 
However, this is exactly the Latin property, and so the set of lines represented before $e_i$ forms a subsquare, namely $L_i$. 

Consider one of the lines that intersects $e_i$ that did not intersect $L_{i-1}$, say $\ell_i$. 
The smallest subsquare, say $L'$, that contains $L_{i-1}$ and intersects $\ell_i$ will contain the entry $e_i$, and so would contain $L_i$ as a subsquare. 
However, $L_i$ contains $L_{i-1}$ and intersects $\ell_i$, and so contains $L'$ as a subsquare, implying $L'=L_i$. 
It then follows that $L_0, L_1, \ldots, L_\alpha$ is a connected chain. 

In this constructed sequence of length $d$, the first sequence element has weight $3$, the first sequence element chosen in each $L_i$ for $2 \leq i \leq \alpha$ has weight $2$, and the $d-\alpha$ remaining sequence elements have weight $1$. 
Since the sum of the weights is $3n$, we have $3n = 3(1) + 2(\alpha-1) + 1(d-\alpha) = \alpha +d +1$, from which it follows that $\alpha = 3n-1-d$. 
Thus, the length of the chain that we constructed is $3n-1-d$, as required. \end{proof}

Combining Theorems~\ref{thm:lazyburning_mcs_equality} and \ref{thm:mcs_equality_FuncOf_stc} now provides an exact value for $b_L(H_L)$ in terms of $\mathrm{scc}(L)$. 

\begin{corollary}
\label{exact_value_H_L}
If $L$ is a Latin square of order $n$, then $b_L(H_L) = n^2 - 3n +1 +\mathrm{scc}(L)$.
\end{corollary}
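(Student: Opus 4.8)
The plan is to simply chain together the two structural identities already in hand, since Corollary~\ref{exact_value_H_L} is a formal consequence of Theorems~\ref{thm:lazyburning_mcs_equality} and \ref{thm:mcs_equality_FuncOf_stc}. First I would invoke Theorem~\ref{thm:lazyburning_mcs_equality}, which expresses the lazy burning number purely in terms of the maximum cover-sequence length:
\[
b_L(H_L) = n^2 - \mathrm{mcs}(L).
\]
Next I would substitute the value of $\mathrm{mcs}(L)$ supplied by Theorem~\ref{thm:mcs_equality_FuncOf_stc}, namely $\mathrm{mcs}(L) = 3n - 1 - \mathrm{scc}(L)$, which in turn is the combination of the two bounding Lemmas~\ref{lem:mcs_bounded_below_stc} and \ref{lem:mcs_bounded_above_stc}. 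No further case analysis is needed.

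The remaining step is a one-line computation: from the two displayed identities,
\[
b_L(H_L) = n^2 - \mathrm{mcs}(L) = n^2 - \bigl(3n - 1 - \mathrm{scc}(L)\bigr) = n^2 - 3n + 1 + \mathrm{scc}(L),
\]
which is exactly the claimed formula. Thus there is no genuine obstacle at this stage; all of the combinatorial work — translating lazy burning sets into (weak) cover-sequences via Theorem~\ref{lbs_weak_characterization}, and then the cover-sequence surgery in Lemma~\ref{lem:mcs_bounded_above_stc} that converts an optimal cover-sequence into a connected chain — has already been carried out in the preceding results. If I had to name the part of the overall development that carries the weight, it would be Lemma~\ref{lem:mcs_bounded_above_stc}, since that is where one must argue that an arbitrary maximum-length cover-sequence can be massaged (moving entries earlier, inserting weight-$2$ or weight-$1$ entries, deleting entries dropped to weight $0$) into a form whose "first entries of each $L_i$" trace out a legitimate connected chain; Corollary~\ref{exact_value_H_L} itself only harvests the bookkeeping.

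As a sanity check on the statement, note that when $L$ has no proper subsquares the shortest connected chain is forced to be $\emptyset \subseteq \{e\} \subseteq L$, so $\mathrm{scc}(L) = 2$ and the formula yields $b_L(H_L) = n^2 - 3n + 3$, matching the lower bound announced in the abstract; at the other extreme, a tower of subsquares each roughly doubling the previous one gives $\mathrm{scc}(L) = 1 + \lfloor \log_2 n\rfloor$ and hence $b_L(H_L) = n^2 - 3n + 2 + \lfloor \log_2 n\rfloor$, the claimed upper bound. So after the substitution above I would also remark, in passing, that Corollary~\ref{exact_value_H_L} reduces all subsequent extremal questions about $b_L(H_L)$ to extremal questions about $\mathrm{scc}(L)$.
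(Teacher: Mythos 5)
Your proposal is correct and is essentially identical to the paper's proof of Corollary~\ref{exact_value_H_L}: the paper likewise obtains the formula by combining Theorem~\ref{thm:lazyburning_mcs_equality}, $b_L(H_L)=n^2-\mathrm{mcs}(L)$, with Theorem~\ref{thm:mcs_equality_FuncOf_stc}, $\mathrm{mcs}(L)=3n-1-\mathrm{scc}(L)$, and performing the same one-line substitution. Your added remarks about where the real work lies and the consistency checks against Corollary~\ref{cor:bounds_Bl} are accurate but not needed for the proof itself.
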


It is straightforward to bound $\mathrm{scc}(L)$ once we know that the order of a subsquare can be at most half the size of the order of the Latin square it embeds into; see \cite{designs_handbook}. 

\begin{theorem} \label{thm:stc_bounds}
If $L$ is a Latin square of order $n$, then $2 \leq \mathrm{scc}(L) \leq \lfloor \log_2{n} \rfloor+1$.
\end{theorem}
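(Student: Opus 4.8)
The plan is to treat the two inequalities separately. The lower bound is essentially immediate from the fact that $S_1$ is a single cell, while the upper bound rests on the classical bound that a proper subsquare of a Latin square of order $k$ has order at most $k/2$ (see \cite{designs_handbook}); the geometric decay this forces along a connected chain is exactly what produces the logarithm.

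For the lower bound $\mathrm{scc}(L)\ge 2$, I would first note that connected chains exist, so that $\mathrm{scc}(L)$ is well-defined: starting from any single cell $S_1$, and given a proper subsquare $S_{i-1}$, choose a line $\ell$ of $L$ that is not a line of $S_{i-1}$ (possible since $S_{i-1}$ is proper) and let $S_i$ be the intersection of all subsquares of $L$ that contain the entries of $S_{i-1}$ and meet $\ell$; this intersection is again a subsquare and is realised by $L$ itself, so it is the \emph{smallest} such subsquare, and since $\ell$ is a line of $S_i$ but not of $S_{i-1}$ we get $S_{i-1}\subsetneq S_i$, so the construction strictly increases the order and terminates at $L$. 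Now in any connected chain $\emptyset=S_0\subseteq S_1\subseteq\cdots\subseteq S_\alpha=L$, the term $S_1$ is a single cell, which for $n\ge 2$ is a proper subsquare of $L$; hence $\alpha\ge 2$, so $\mathrm{scc}(L)\ge 2$.

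For the upper bound, the engine is that in a connected chain \emph{every} inclusion is strict: the line $\ell_i$ witnessing the connectedness condition at step $i$ is a line of $S_i$ but not of $S_{i-1}$, so $S_{i-1}\subsetneq S_i$. Consequently, for $2\le i\le\alpha$ the square $S_{i-1}$ is a proper subsquare of $S_i$, and by the half-size bound $\mathrm{ord}(S_i)\ge 2\,\mathrm{ord}(S_{i-1})$. Since $\mathrm{ord}(S_1)=1$, iterating yields $n=\mathrm{ord}(S_\alpha)\ge 2^{\alpha-1}$, hence $\alpha-1\le\log_2 n$, and as $\alpha$ is an integer, $\alpha\le\lfloor\log_2 n\rfloor+1$. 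This bounds the length of every connected chain, hence bounds the minimum, giving $\mathrm{scc}(L)\le\lfloor\log_2 n\rfloor+1$.

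The argument is short and the only points requiring care are expository. One must pin down "$\ell$ is a line of $S_i$" precisely enough to see that connectedness forces strict inclusions — without this, a chain could be padded with repeated terms and made arbitrarily long, defeating the upper bound. One should also either cite the half-size subsquare bound cleanly or reprove it in a line: fix the columns of the smaller square $S'$ together with a row of the larger square $S$ lying outside $S'$; each of the $\mathrm{ord}(S')$ cells of that row in those columns must carry a symbol not occurring in $S'$ (otherwise a symbol of $S'$ repeats in one of those columns), and those symbols are distinct, so $\mathrm{ord}(S')\le\mathrm{ord}(S)-\mathrm{ord}(S')$. I do not anticipate any genuine obstacle beyond getting these two bookkeeping points stated correctly.
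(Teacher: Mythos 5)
Your proof is correct and follows essentially the same route as the paper: the lower bound comes from the three distinct members $\emptyset$, the $1\times1$ subsquare, and $L$ itself, and the upper bound from the half-order subsquare bound forcing $|S_i|\ge 2^{i-1}$ along the chain, hence $\alpha\le\lfloor\log_2 n\rfloor+1$. Your added care (existence of connected chains, strictness of the inclusions, and a one-line proof of the half-size bound) only makes explicit what the paper leaves implicit.
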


\begin{proof}
For the lower bound, any chain $S_0\subseteq S_1\subseteq \cdots\subseteq S_\alpha$ for $L$ must contain $S_0=\emptyset$, the trivial $1\times 1$ square $S_1$, and $S_\alpha=S$. Hence, any chain includes at least three distinct subsquares and, therefore, has a length of at least two.

For the upper bound, we let $|\cdot|$ represent the order of a Latin square and note that any chain $S_0\subseteq S_1\subseteq \cdots\subseteq S_\alpha$ of $L$ must satisfy $|S_{i+1}|\geq 2|S_{i}|$ for $i \geq 1$.
For induction, we have $|S_i| \geq 2^{i-1}$  and $|S_1|=1$, from which it follows that $|S_\alpha|\geq 2^{\alpha-1}$. 
Since $|S_\alpha|\leq n$, it follows that $2^{\alpha-1}\leq n$ and so $\alpha \leq (\log_2{n})+1$, and the proof is complete. 
\end{proof}

Combining this with previous results, we arrive at our main result, giving asymptotically tight bounds on both $\text{mcs}(L)$ and $b_L(H_L)$. 

\begin{corollary} \label{cor:bounds_Bl}
If $L$ is a Latin square of order $n$, then the following statements hold:
\begin{enumerate}
\item $n^2 -3n +3 \leq b_L(H_L) \leq n^2-3n + 2 + \lfloor \log_2{n} \rfloor .$ 
\item $3n - 2 - \lfloor \log_2{n} \rfloor \leq \mathrm{mcs}(L) \leq 3n -3.$
\end{enumerate}
\end{corollary}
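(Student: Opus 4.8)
The plan is to simply assemble Corollary~\ref{cor:bounds_Bl} from the machinery already in place. The two statements are equivalent via Theorem~\ref{thm:lazyburning_mcs_equality}, which gives $b_L(H_L) = n^2 - \mathrm{mcs}(L)$, so it suffices to prove statement (2), namely $3n - 2 - \lfloor \log_2 n\rfloor \le \mathrm{mcs}(L) \le 3n-3$, and then subtract from $n^2$ to obtain statement (1). For statement (2), the idea is to combine Theorem~\ref{thm:mcs_equality_FuncOf_stc}, which says $\mathrm{mcs}(L) = 3n - 1 - \mathrm{scc}(L)$, with the bounds $2 \le \mathrm{scc}(L) \le \lfloor \log_2 n\rfloor + 1$ from Theorem~\ref{thm:stc_bounds}. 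Plugging the upper bound $\mathrm{scc}(L) \le \lfloor \log_2 n\rfloor + 1$ into the formula gives $\mathrm{mcs}(L) \ge 3n - 1 - (\lfloor \log_2 n\rfloor + 1) = 3n - 2 - \lfloor \log_2 n\rfloor$, and plugging in $\mathrm{scc}(L) \ge 2$ gives $\mathrm{mcs}(L) \le 3n - 3$.

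Next I would translate these into statement (1) directly: from $b_L(H_L) = n^2 - \mathrm{mcs}(L)$ and $3n-2-\lfloor\log_2 n\rfloor \le \mathrm{mcs}(L) \le 3n-3$, negating and adding $n^2$ reverses the inequalities, yielding $n^2 - (3n-3) \le b_L(H_L) \le n^2 - (3n - 2 - \lfloor \log_2 n\rfloor)$, i.e. $n^2 - 3n + 3 \le b_L(H_L) \le n^2 - 3n + 2 + \lfloor \log_2 n\rfloor$. Alternatively one can quote Corollary~\ref{exact_value_H_L}, which already packages $b_L(H_L) = n^2 - 3n + 1 + \mathrm{scc}(L)$, and then just apply Theorem~\ref{thm:stc_bounds} to the $\mathrm{scc}(L)$ term; I would probably present it this way since Corollary~\ref{exact_value_H_L} is the cleanest intermediate statement and makes both bounds a one-line substitution.

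There is essentially no obstacle here: every input has been proved earlier in the excerpt, and the argument is pure arithmetic bookkeeping with inequalities. The only thing to be careful about is the direction of the inequalities when passing from bounds on $\mathrm{mcs}(L)$ (or $\mathrm{scc}(L)$) to bounds on $b_L(H_L)$, since the relation involves subtraction from $n^2$, which flips upper and lower bounds. I would write the proof in two short sentences, one for each item, citing Theorem~\ref{thm:mcs_equality_FuncOf_stc} and Theorem~\ref{thm:stc_bounds} for item (2), and Corollary~\ref{exact_value_H_L} (or Theorem~\ref{thm:lazyburning_mcs_equality} together with item (2)) and Theorem~\ref{thm:stc_bounds} for item (1).

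\begin{proof}
For statement (2), Theorem~\ref{thm:mcs_equality_FuncOf_stc} gives $\mathrm{mcs}(L) = 3n - 1 - \mathrm{scc}(L)$, and Theorem~\ref{thm:stc_bounds} gives $2 \le \mathrm{scc}(L) \le \lfloor \log_2 n\rfloor + 1$. Substituting the upper bound on $\mathrm{scc}(L)$ yields $\mathrm{mcs}(L) \ge 3n - 1 - (\lfloor \log_2 n\rfloor + 1) = 3n - 2 - \lfloor \log_2 n\rfloor$, and substituting the lower bound yields $\mathrm{mcs}(L) \le 3n - 1 - 2 = 3n - 3$, proving (2). For statement (1), Corollary~\ref{exact_value_H_L} gives $b_L(H_L) = n^2 - 3n + 1 + \mathrm{scc}(L)$, so applying the same bounds $2 \le \mathrm{scc}(L) \le \lfloor \log_2 n\rfloor + 1$ gives $n^2 - 3n + 1 + 2 \le b_L(H_L) \le n^2 - 3n + 1 + \lfloor \log_2 n\rfloor + 1$, that is, $n^2 - 3n + 3 \le b_L(H_L) \le n^2 - 3n + 2 + \lfloor \log_2 n\rfloor$, which completes the proof.
\end{proof}
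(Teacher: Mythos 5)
Your proof is correct and matches the paper's intent exactly: the corollary is obtained by substituting the bounds $2 \le \mathrm{scc}(L) \le \lfloor \log_2 n\rfloor + 1$ from Theorem~\ref{thm:stc_bounds} into Corollary~\ref{exact_value_H_L} and Theorem~\ref{thm:mcs_equality_FuncOf_stc}, which is precisely the "combining previous results" step the paper performs. The arithmetic and inequality directions are handled correctly, so there is nothing to add.
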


The bounds in Corollary~\ref{cor:bounds_Bl} (1) on $b_L(H_L)$ are both achieved by well-known Latin squares, and we present both of those results below, beginning with the lower bound.

\begin{theorem}\label{lem:cyclic_scc}
If $n \geq 1$ and $L$ is the cyclic Latin square of size $n$, then $\mathrm{scc}(L)=2.$ In particular, $b_L(H_L)=n^2 -3n +3.$
\end{theorem}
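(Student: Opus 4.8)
The plan is to use Corollary~\ref{exact_value_H_L}, which reduces the claim $b_L(H_L) = n^2 - 3n + 3$ to showing $\mathrm{scc}(L) = 2$ for the cyclic Latin square $L = \mathbb{Z}_n$ with entry set $\{(r,c,s) : r + c \equiv s \pmod{n}\}$. By Theorem~\ref{thm:stc_bounds} we already have $\mathrm{scc}(L) \geq 2$, so the entire content is the upper bound $\mathrm{scc}(L) \leq 2$, i.e.\ exhibiting a connected chain $\emptyset = S_0 \subseteq S_1 \subseteq S_2 = L$ of length $2$. By the definition of a connected chain, this requires: (i) $S_1$ is a single entry, say $(0,0,0)$; and (ii) there is a line $\ell$ of $L$ that is in $S_2 = L$ but not in $S_1$, such that $L$ itself is the \emph{smallest} subsquare of $L$ intersecting $\ell$ and containing $S_1$. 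Condition (ii) is automatic for any $\ell$ as long as the only subsquare of $L$ containing $(0,0,0)$ together with any second entry on a new line is $L$ itself.

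So the crux is the following group-theoretic fact: the cyclic group $\mathbb{Z}_n$ has no proper subsquare of its Cayley table containing the identity except the single cell $\{(0,0,0)\}$ — more precisely, the smallest subsquare containing $(0,0,0)$ and any one other entry is all of $L$. I would prove this directly from the arithmetic structure. Suppose a subsquare $L'$ of $L$ contains $(0,0,0)$ and at least one further entry. A subsquare of the Cayley table of $\mathbb{Z}_n$ corresponds to a triple $(A,B,C)$ of equal-size subsets with $A + B = C$ (as a set equality with multiplicities matching the Latin square property); since $(0,0,0) \in L'$ we have $0 \in A$, $0 \in B$, $0 \in C$. Then $B = 0 + B \subseteq C$ and $A \subseteq C$, and in fact $A + B \subseteq C$ with $|C| = |A| = |B|$ forces (by a standard sumset/Cauchy–Davenport-type or direct pigeonhole argument, or just: for fixed $a \in A$, $a + B \subseteq C$ and $|a+B| = |B| = |C|$, so $a + B = C$ for every $a$, hence $A$ is contained in a coset of the stabilizer of $C$, and $C$ is a union of cosets of the subgroup $H = \{c - c' : c,c' \in C\}$...). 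The cleanest route: show $C$ is a subgroup. Indeed $0 \in C$; and if $c, c' \in C$, pick $a \in A$ with $a + 0 = c$... this needs care, so I'd instead argue: $a + B = C$ for all $a \in A$ implies $A - A$ stabilizes $C$ and $B$ is a coset; symmetrically $A$ is a coset; combining with $0 \in A \cap B \cap C$ shows $A = B = C = H$ for a subgroup $H \leq \mathbb{Z}_n$. Since $L'$ is proper and nonsingleton, $H$ is a proper nontrivial subgroup. Finally, observe such an $H$ would fail condition (ii): there is no single line $\ell$ outside $S_1$ whose smallest enclosing subsquare (over $S_1$) is all of $L$, unless... — wait, that is backwards; rather, I must exhibit $\ell$ making $S_2 = L$ the smallest, and the obstruction to $\mathrm{scc}(L) = 2$ would be if \emph{every} new line $\ell$ forced a proper intermediate subsquare. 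So what I actually need: there exists an entry $e$ on a new line such that the smallest subsquare containing $(0,0,0)$ and $e$ is $L$. Equivalently, there exists $e = (r,c,s)$ with $r \neq 0$ (say) such that no proper subgroup $H$ of $\mathbb{Z}_n$ contains $\{0, r, c\}$ (equivalently $\gcd(r, c, n) = 1$, and since $r + c \equiv s$, also pulls in $s$). Taking $e = (1, 0, 1)$ (i.e.\ $r = 1$) works: the only subgroup containing $1$ is $\mathbb{Z}_n$ itself, so the smallest subsquare containing $(0,0,0)$ and $(1,0,1)$ is $L$.

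Concretely, the steps in order: (1) set up the cyclic square and recall from Theorem~\ref{thm:stc_bounds} that $\mathrm{scc}(L) \geq 2$; (2) prove the structural lemma that every subsquare of the $\mathbb{Z}_n$ Cayley table containing $(0,0,0)$ has the form $\{(a,b,a+b) : a,b \in H\}$ for a subgroup $H \leq \mathbb{Z}_n$ (the sumset argument above); (3) conclude that the smallest subsquare containing $(0,0,0)$ and $(1,0,1)$ must correspond to a subgroup $H$ with $1 \in H$, hence $H = \mathbb{Z}_n$ and that subsquare is $L$; (4) therefore $\emptyset \subseteq \{(0,0,0)\} \subseteq L$ is a connected chain of length $2$, giving $\mathrm{scc}(L) \leq 2$, hence equality; (5) plug $\mathrm{scc}(L) = 2$ into Corollary~\ref{exact_value_H_L} to get $b_L(H_L) = n^2 - 3n + 1 + 2 = n^2 - 3n + 3$. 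Handle the tiny cases $n = 1, 2$ separately if the "proper subsquare" conventions make them degenerate (for $n=1$, $L$ is a single cell and the formula should be read accordingly; for $n = 2$ one checks directly).

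The main obstacle I anticipate is step (2): pinning down precisely that a subsquare through the identity of a cyclic Cayley table is exactly a subgroup. The subsquare-of-Cayley-table $\leftrightarrow$ subgroup correspondence is classical when the subsquare is required to be a sub-Cayley-table, but a general subsquare is a priori just a combinatorial object $(A, B, C)$ with the Latin property, and one must genuinely use $0 \in A \cap B \cap C$ plus the Latin/permutation constraints to force $A = B = C = H$. The sumset argument ("$a + B = C$ for every $a \in A$, so $A - A$ fixes $C$ setwise, and symmetrically, from which subgroup structure follows") is short but needs to be written carefully to avoid circularity; alternatively, one can cite a known result that subsquares of $\mathbb{Z}_n$'s Cayley table are cosets of sub-Cayley-tables, which in the identity-containing case are subgroups. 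Everything else is bookkeeping.
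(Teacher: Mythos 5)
Your proposal is correct and is essentially the paper's own argument: both hinge on the correspondence between subsquares of the $\mathbb{Z}_n$ Cayley table and cosets of subgroups (the paper simply cites \cite[Theorem~1.6.4]{keed}, while you prove the identity-containing case directly via the cardinality/sumset argument $a+B=C$, $A=B=C=H$), and then observe that a subgroup containing $1$ (equivalently, in the paper's phrasing with the symbol-$2$ line, a coset containing two consecutive elements) must be all of $\mathbb{Z}_n$, after which Corollary~\ref{exact_value_H_L} gives $b_L(H_L)=n^2-3n+3$. The only touch-up needed is to state the connectedness witness as a line rather than the entry $(1,0,1)$: take $\ell$ to be the row-$1$ line (or any line through a generator) and note that, by your structural lemma, any subsquare containing $(0,0,0)$ and merely intersecting $\ell$ has row set a subgroup containing $1$ and hence equals $L$ --- this matches the definition of a connected chain, which quantifies over subsquares intersecting a line, not over subsquares containing a prescribed entry.
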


\begin{proof}
To show that $\mathrm{scc}(L)=2$, it suffices to show that $L$ is the smallest subsquare containing some $1 \times 1$ cell $S_1$ and some line $\ell$ that does not intersect $S_1$. Let $S_1$ be the subsquare formed by the cell in column 1, row 1, containing the symbol 1, and let $\ell$ be the line representing the symbol 2. We note that as $L$ is the cyclic Latin square, it has a direct correspondence with the addition table for the group $\mathbb{Z}_n$; furthermore, any subsquare of $L$ has a direct correspondence with some coset of $\mathbb{Z}_n$; see \cite[Theorem~1.6.4]{keed}. Thus, any subsquare that must contain $S_1$ and intersect $\ell$ corresponds to a coset of $\mathbb{Z}_n$ that contains two consecutive integers. This can only be all of $\mathbb{Z}_n$, implying that $L$ is the smallest such subsquare and showing that $L$ contains a connected chain of length 2, proving the claim.
\end{proof}

Another such example that shows the lower bound exists. 
The existence of Latin squares without subsquares of certain orders has been extensively studied, and recently a preprint has claimed to settle the existence of proper subsquare-free Latin squares for all orders \cite{no_subsquares}. 
In particular, in these proper subsquare-free Latin squares, no chain has length larger than $2$.

\begin{lemma}\cite{no_subsquares}
For $n\notin \{4,6\}$, there exists a Latin square $L$ of order $n$ without proper subsquares, and so $scc(L)=2$. 
\end{lemma}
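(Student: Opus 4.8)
The statement bundles two claims: that a proper-subsquare-free Latin square $L$ of order $n$ exists for every $n \notin \{4,6\}$, and that any such $L$ satisfies $\mathrm{scc}(L) = 2$. The first claim is exactly the content of the cited preprint \cite{no_subsquares} (and for the very small orders $n \in \{2,3\}$ one can instead just exhibit an explicit square), so the plan is to take it as given and prove only the second claim.

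So, suppose $L$ has order $n \geq 2$ and its only subsquares are the single entries and $L$ itself. The lower bound $\mathrm{scc}(L) \geq 2$ is immediate from Theorem~\ref{thm:stc_bounds}, so it remains to produce a connected chain of length $2$. I would follow the template of the proof of Theorem~\ref{lem:cyclic_scc}: set $S_0 = \emptyset$, let $S_1 = \{(r,c,s)\}$ be a single entry, set $S_2 = L$, and then verify the connected-chain condition. For $i=1$ there is nothing to check beyond $S_1$ being a trivial $1 \times 1$ square, exactly as in the proof of Theorem~\ref{thm:stc_bounds}. For $i = 2$, pick any line $\ell$ of $L$ not incident with $(r,c,s)$ — such a line exists since $(r,c,s)$ meets only $3$ of the $3n$ lines and $3n > 3$ for $n \geq 2$ — and argue that $L$ is the smallest subsquare that contains $S_1$ and intersects $\ell$. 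Indeed, any subsquare meeting both $(r,c,s)$ and some entry of $\ell$ must contain two distinct rows, or two distinct columns, or two distinct symbols, according to the type of $\ell$, so it has order at least $2$; since $L$ has no proper subsquares, the only such subsquare is $L$ itself. Hence $(\emptyset, S_1, L)$ is a connected chain of length $2$, giving $\mathrm{scc}(L) \leq 2$ and therefore $\mathrm{scc}(L) = 2$.

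There is no real obstacle here: the argument is a direct unwinding of the definition of a connected chain, and all of the substance — the existence of a proper-subsquare-free Latin square of each order $n \notin \{4,6\}$ — is imported wholesale from \cite{no_subsquares}. The only points needing a moment's care are the degenerate base step $i=1$ (where $S_0 = \emptyset$ imposes no genuine constraint, so that $S_1$ being a trivial square suffices) and the trivial order-$1$ case, which lies outside the range of interest anyway.
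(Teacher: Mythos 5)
Your proposal is correct and matches the paper's treatment: the paper gives no proof of its own beyond citing \cite{no_subsquares} for existence and remarking that subsquare-freeness forces every connected chain to have length $2$, which is exactly the content of your argument. Your explicit verification that $(\emptyset, S_1, L)$ satisfies the connected-chain condition (via a line $\ell$ missing the single entry) is the same routine check used in the proof of Theorem~\ref{lem:cyclic_scc}, so nothing is genuinely different.
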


To show the tightness of the upper bound of Corollary~\ref{cor:bounds_Bl} (1), we use powers of intercalates. Given two Latin squares $L_1 \subseteq R_1 \times C_1 \times S_1$ and $L_2 \subseteq R_2 \times C_2 \times S_2$, their \textit{product} is the Latin square $L=L_1 \times L_2 \subseteq (R_1 \times R_2) \times (C_1 \times C_2) \times (S_1 \times S_2)$, which is formed by including entry $((r_1,r_2),(c_1,c_2),(s_1,s_2))$ for each pair $(r_1, c_1, e_1)\in L_1$ and $(r_2, c_2, e_2)\in L_2$. We additionally define $\mathcal{I}$ as the \emph{intercalate}, which is the unique $2 \times 2$ Latin square, and define $\mathcal{I}^k$ as the product $\mathcal{I}^{k-1} \times \mathcal{I}$. 

\begin{theorem} \label{thm:stc_mod_Z2k} If $\mathcal{I}$ denotes the intercalate and $k \geq 1$, then $\mathrm{scc}(\mathcal{I}^k)=k+1$.
\end{theorem}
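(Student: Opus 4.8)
The plan is to realize $\mathcal{I}^k$ as the addition table of the group $V := \mathbb{F}_2^k$ and then translate everything into linear algebra over $\mathbb{F}_2$. Since the product of two addition tables is the addition table of the direct product and $\mathcal{I}$ is the addition table of $\mathbb{F}_2$, the Latin square $\mathcal{I}^k$ is, directly from the definition of the product, the addition table of $V$, a Latin square of order $n = 2^k$. I will use the correspondence --- which follows as in the cyclic case from \cite[Theorem~1.6.4]{keed}, or by a short direct check --- that a subsquare of the addition table of an abelian group is precisely a triple $(a+W,\,b+W,\,(a+b)+W)$ of cosets of a common subgroup $W$; for $G = V$ this means subsquares are parametrized by affine subspaces and a subsquare attached to $W$ has order $2^{\dim W}$. (The direct check: if a subsquare has row set $R$, column set $C$, symbol set $T$ of equal size with $r + c \in T$ for all $r \in R,\,c \in C$, then fixing $r_0 \in R$ and $c_0 \in C$ one shows $W := C - c_0$ is a subgroup and $R = r_0 + W$, $C = c_0 + W$, $T = (r_0+c_0) + W$.)

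The crux is the following \emph{doubling claim}: if $S$ is a nonempty subsquare of $\mathcal{I}^k$ and $\ell$ is a line of $\mathcal{I}^k$ that is not a line of $S$, then the smallest subsquare that contains $S$ and has $\ell$ among its lines has order exactly $2|S|$. Writing $S = (a+W)\times(b+W)\times((a+b)+W)$ and taking, say, $\ell$ to be the row indexed by some $r \notin a+W$, the subsquare attached to $W' := W + \langle r - a\rangle$ and anchored at $a,b$ contains $S$, has $\ell$ as a row, and has order $2|W| = 2|S|$ (since $r - a \notin W$ forces $\dim W' = \dim W + 1$); this gives the upper bound. For the lower bound, any subsquare $S'$ with $S \subseteq S'$ and $\ell$ a line of $S'$ but not of $S$ satisfies $S \subsetneq S'$, so $S$ is a proper subsquare of $S'$ and hence $|S| \le |S'|/2$ by the standard half-order bound for proper subsquares (\cite{designs_handbook}); the column- and symbol-line cases are identical. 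The essential feature here --- and the reason the answer grows only logarithmically, unlike, say, the cyclic square where $\mathrm{scc} = 2$ --- is that adjoining one vector to an $\mathbb{F}_2$-subspace exactly doubles it.

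With the doubling claim in hand, the lower bound $\mathrm{scc}(\mathcal{I}^k) \ge k+1$ is a telescoping count: in any connected chain $\emptyset = S_0 \subseteq S_1 \subseteq \cdots \subseteq S_\alpha = \mathcal{I}^k$ we have $|S_1| = 1$ by definition, and for each $i$ with $2 \le i \le \alpha$ the defining property of a connected chain says $S_i$ is the smallest subsquare containing $S_{i-1}$ and meeting a line $\ell_i$ not in $S_{i-1}$, so $|S_i| = 2|S_{i-1}|$ by the claim; hence $|S_i| = 2^{i-1}$ and $2^k = |S_\alpha| = 2^{\alpha - 1}$ forces $\alpha = k+1$. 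For the upper bound I will exhibit an explicit connected chain: with $W_j := \langle e_1,\ldots,e_j\rangle$ set $S_{j+1} := W_j \times W_j \times W_j$ for $0 \le j \le k$, so $S_0 = \emptyset$, $S_1 = \{(0,0,0)\}$ is a single cell, and $S_{k+1} = \mathcal{I}^k$. Connectedness at stage $i$ ($2 \le i \le k+1$) is witnessed by $\ell_i$ equal to the row indexed by $e_{i-1}$: since $e_{i-1} \notin W_{i-2}$ and $W_{i-2} + \langle e_{i-1}\rangle = W_{i-1}$, a dimension count using the subsquare characterization shows $S_i$ is indeed the smallest subsquare containing $S_{i-1}$ with $e_{i-1}$ among its rows. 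Combining the two bounds gives $\mathrm{scc}(\mathcal{I}^k) = k+1$; via Corollary~\ref{exact_value_H_L} this also yields $b_L(H_{\mathcal{I}^k}) = n^2 - 3n + 2 + \log_2 n$, showing the upper bound of Corollary~\ref{cor:bounds_Bl}(1) is attained.

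The step I expect to require the most care is the bookkeeping around the subsquare characterization and the doubling claim --- in particular keeping track of the three (a priori distinct) cosets for rows, columns, and symbols, and pinning down the precise reading of ``a line $\ell$ in $S_i$'' (a row, column, or symbol of the subsquare $S_i$, regarded as a line of $\mathcal{I}^k$) so that the connected-chain definition is applied to the letter. Once that is nailed down, the remaining content is routine linear algebra over $\mathbb{F}_2$ together with the half-order bound for proper subsquares.
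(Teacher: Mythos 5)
Your proof is correct and, at its core, is the same argument as the paper's: both rest on identifying the subsquares of $\mathcal{I}^k$ with (cosets of) subgroups of $\mathbb{Z}_2^k$ and on the fact that adjoining a single new element to an $\mathbb{F}_2$-subspace exactly doubles it, so each step of a connected chain can only double the order. The differences are organizational rather than substantive: the paper obtains the subgroup structure by translating subsquares through the identity cell, proves the lower bound by contradiction (a more-than-doubling step would produce an intermediate subsquare violating connectedness), and quotes the general $\lfloor \log_2 n\rfloor + 1$ bound of Theorem~\ref{thm:stc_bounds} for the upper bound, whereas you package the same content as an exact doubling claim (half-order bound plus the explicit $W'=W+\langle r-a\rangle$ construction) and exhibit the standard-basis chain $W_j\times W_j\times W_j$ explicitly.
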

\begin{proof}
Theorem~\ref{thm:stc_bounds} shows the upper bound. 
We first note that the Latin square $\mathcal{I}^k$ is equivalent to the addition or Cayley table of the group $(\mathbb{Z}_2^k,+)$, where $+$ represents the bitwise XOR operation on the binary words $\mathbb{Z}_2^k$. 
As such, if $S$ is a subsquare of $\mathcal{I}^k$, then so is $\{(r+v, c+v, s) : (r,c,s) \in S\}$ and so is $\{(r, c+v, s+v) : (r,c,s) \in S\}$, for any $v \in \mathbb{Z}_2^k$. 
This can be seen in the first case by noting that $(r+v) + (c+v) = (r+c)+2v = r+c = s$ in the group $(\mathbb{Z}_2^k,+)$. 
As a consequence of this, any subsquare can be mapped to a subsquare of the same size with $((0,0,\ldots, 0), (0,0,\ldots, 0), (0,0,\ldots, 0))$ in the subsquare, and so we may assume that there is some chain that contains this entry in all of the chain's subsquares.  

Consider some subsquare of the chain. 
If $y$ is a row that occurs in that subsquare, then since $y + (0,0,\ldots, 0) = y$, it follows that symbol $y$ must occur in the subsquare in row $y$ and column $(0, 0\ldots, 0)$. 
Similarly, $y$ will be a column represented in the subsquare since symbol $y$ is in row $(0, 0,\ldots, 0)$ in column $y$. 
It follows that the subsquare is a subset of $Y\times Y \times Y$ for some subset of elements $Y$ from the group $(\mathbb{Z}_2^k,+)$. 
As such, if $y_1,y_2 \in Y$, it follows that row $y_1$ and column $y_2$ will contain symbol $y_1+y_2$, and so we must have $y_1+y_2\in Y$. 
We also have $y+y=(0,0,\ldots, 0)$ for all $y$.
This implies that $Y$ has closure under addition and inverse, and so is a subgroup of $(\mathbb{Z}_2^k,+)$. 

Suppose for the sake of contradiction that there is a shortest connected chain $S_0 \subseteq \cdots \subseteq S_\alpha$ with $\alpha < k+1$. 
This gives that there is some $i$ with $2|S_i|<|S_{i+1}|$. 
Suppose $R_i$ is the corresponding subgroup of $(\mathbb{Z}_2^k,+)$ coming from $S_i$, and $R_{i+1}$ defined similarly. 
If $x$ is a group element in $R_{i+1}$ but not $R_{i}$, then consider the smallest subgroup that contains both $R_i$ and $x$, say $R_i^x$. 
This subgroup $R_i^x$ must be a subset of or equal to $R_{i+1}$. 
We have that $r,r+x \in R_i^x$ for each $r \in R_i$. 
We also have that $r+x \neq r'$ and $r+x \neq r'+x$ for $r, r' \in R_i$. 
It follows that $R'=R_i \cup \{x+r : r \in R_i\}$ has cardinality $2|R_i|$. 
Since $(x+r_1)+(x+r_2) = r_1+r_2 \in R_i\subseteq R^\prime$ and  $(x+r_1) + r_2 = x + (r_1+r_2) \in R' \setminus R_i$, it follows that $R'$ is closed under addition. 
As $R'$ is also closed under inversion, we have that $R'$ is a subgroup, and so it must be that $R' = R_i^x$. 
The entries of the Latin square $\mathcal{I}^k$ in rows $R'$ and columns $R'$ contain the symbols $R'$, and so these entries form a subsquare that strictly contains $S_i$ and is strictly contained in $S_{i+1}$. 
This argument holds for all $x$ and, therefore, violates the chain's connectedness, forming the required contradiction and completing the proof.  
\end{proof}

Together, Theorems~\ref{lem:cyclic_scc} and \ref{thm:stc_mod_Z2k} show that the bounds of Theorem~\ref{thm:stc_bounds} and Corollary~\ref{cor:bounds_Bl} are tight. We have shown that the lazy burning number of an $n$-uniform Latin square hypergraph is determined entirely by its shortest connected chain, and the length of this chain is at most $\log_2 n +1$. We explore this relationship between the lazy burning number and the shortest connected chain more in the next section, where we consider the separate but similar problem of lazily burning the $3$-uniform construction of a Latin square hypergraph.

\section{Burning the 3-uniform case}

The 3-uniform construction forces a large amount of structure onto any lazy burning set; a nontrivial set of burned rows, columns, and symbols will ``fill out'' to a subsquare of $L$. 
We note that the lazy burning number of $3$-uniform linear hypergraphs has previously been studied \cite{expander_triple_systems}, although using differing terminology. 

\begin{lemma}\label{lem:3unisubsquare}
Let $L$ be a Latin square, let $H^L = (R,C,S)$ be its associated hypergraph, and let $R^*$, $C^*$, and $S^*$ respectively denote a set of rows, columns, and symbols that are burned. If at least two of $r^*=|R^*|$, $c^*=|C^*|$, and $s^*=|S^*|$ are nonzero, then after the burning finishes propagating, we have that the resulting set $M$ of burned lines corresponds to the smallest subsquare of $L$ that intersects $R^* \cup C^* \cup S^*$. 
\end{lemma}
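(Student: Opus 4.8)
The plan is to establish two inclusions separately and then combine them. Write $T^\circ$ for the smallest subsquare that intersects $R^*\cup C^*\cup S^*$ (equivalently, the smallest subsquare whose line set contains $R^*\cup C^*\cup S^*$); rather than presuppose that such a thing exists, I will actually produce it. By passing to a conjugate of $L$ — which only permutes the roles of rows, columns, and symbols — I may assume $r^*\geq 1$ and $c^*\geq 1$.

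First I would show that every line that ever burns is a line of any subsquare $T$ whose line set contains $R^*\cup C^*\cup S^*$. This goes by induction on the propagation round. The base case is immediate, since the initially burned lines are exactly those of $R^*\cup C^*\cup S^*$. For the inductive step, a newly burned line $\ell$ lies in some entry $e=(r,c,s)$ of $L$ whose other two lines were already burned and hence, by the inductive hypothesis, are lines of $T$; a short case check — using only that $T$ is a subsquare and the Latin property — shows that two lines of a subsquare meeting in a common entry force the third line of that entry to belong to the subsquare, so $\ell$ is a line of $T$. Consequently the final burned set $M$ is contained in the line set of every such $T$.

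Next I would show that $M$ is itself the line set of a subsquare. Let $M_R$, $M_C$, $M_S$ be the sets of burned rows, columns, and symbols once propagation halts. Since $r^*,c^*\geq 1$, after one round some symbol burns, so all three sets are nonempty; and since no further propagation occurs, $M$ is \emph{closed}: whenever two of the three lines of an entry of $L$ lie in $M$, so does the third. A counting argument then pins things down. Fixing a row $r\in M_R$, closure sends the symbols of $L$ in the cells $(r,c)$ with $c\in M_C$ into $M_S$, and distinctness of symbols within a row gives $|M_C|\leq|M_S|$; the reverse inequality and the analogous inequalities for $M_R$ follow by symmetry, so $|M_R|=|M_C|=|M_S|=:m$. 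Then each row of the $m\times m$ subarray of $L$ on rows $M_R$ and columns $M_C$ carries $m$ distinct symbols, all in the $m$-element set $M_S$, hence is a permutation of $M_S$; the same holds column-wise, so this subarray is a subsquare $T^*$ of $L$ with line set exactly $M$. I expect this to be the main obstacle — not because it is deep, but because the counting must be carried out carefully against the Latin square axioms to avoid a gap.

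Finally, I would combine the two parts. The subsquare $T^*$ has line set $M\supseteq R^*\cup C^*\cup S^*$, so it intersects $R^*\cup C^*\cup S^*$; and by the first part, $M$ lies in the line set of every subsquare whose lines contain $R^*\cup C^*\cup S^*$, so $T^*$ is contained in every such subsquare. Hence $T^*$ is the smallest subsquare intersecting $R^*\cup C^*\cup S^*$, and $M$ is its line set, which is the claim. I would also remark that the hypothesis that at least two of $r^*,c^*,s^*$ are nonzero is genuinely needed: if only one is nonzero the burning never propagates, and $M$ is then a single line rather than the line set of a subsquare.
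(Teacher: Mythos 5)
Your proposal is correct and follows essentially the same route as the paper: a counting/closure argument showing the burned rows, columns, and symbols are equinumerous and form a subsquare, plus an argument that every burned line must lie in any subsquare meeting $R^*\cup C^*\cup S^*$. The only cosmetic differences are that you run the minimality half as an induction on propagation rounds (the paper uses a ``first offending line'' argument, which is the same idea) and you spell out in more detail why equal counts together with closure yield a subsquare.
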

\begin{proof}
    Without loss of generality, let $r^*,c^*\geq 1$. Define $r^\prime$, $c^\prime$, and $s^\prime$ as the number of rows, columns, and symbols, respectively, that are burned when propagation stops. We then have that $r^\prime,c^\prime\geq1$. As $L$ is a Latin square, in every $r^\prime\times c^\prime$ sized subsection of $L$, there must be at least $\max(r^\prime,c^\prime)$ different symbols. Thus, when the propagation stops, at least $\max(r^\prime,c^\prime)$ different symbols are burned (either due to their intersection with the lazy burning set or because they burned due to a burned pair $(r,c)$). Therefore, $s^\prime \geq \max(r^\prime,c^\prime).$ A similar argument can then be used to show that $r^\prime \geq \max(c^\prime,s^\prime)$ and that $c^\prime \geq \max(r^\prime,s^\prime)$, forcing $r^\prime = c^\prime = s^\prime$. The set $M$ has the same number of rows, columns, and symbols and is thus a subsquare of $L$ that contains $R^* \cup C^* \cup S^*$. 
    
    Suppose there is a smaller subsquare $N$ that intersects $R^* \cup C^* \cup S^*$. Let $w$ be the first line in $M\setminus N$ to burn through propagation (if multiple lines in $M\setminus N$ burned simultaneously, choose one). Without loss of generality, assume $w$ is a symbol line. It burned due to a burned row line $u$ and a burned column line $v$, both intersecting $N$. We then have the triple $(u,v,w)$ is an entry that belongs to $N$, so $w$ is a symbol intersecting $N$, which is a contradiction. This completes the proof.
\end{proof}

This observation provides a powerful tool for analyzing the lazy burning of the 3-uniform construction. It recalls the notion of a connected chain, as it seems that the propagation resulting from a lazy burning set $B$ in this construction corresponds nearly exactly to filling out surrounding subsquares that contain $B$ and intersect a new line of $L$. The shortest connected chain should then provide a blueprint for how to lazily burn $H^L$, which is the case. 

\begin{theorem}\label{thm:3unistcupper}
    If $L$ is a Latin square, then $b_L(H^L) = \mathrm{scc}(L)+1$.
\end{theorem}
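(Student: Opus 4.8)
The plan is to establish the two inequalities $b_L(H^L)\le \mathrm{scc}(L)+1$ and $b_L(H^L)\ge \mathrm{scc}(L)+1$ separately, in both cases using Lemma~\ref{lem:3unisubsquare} as the main engine together with the elementary fact that the closure of a set of lines under propagation is monotone: if $X\subseteq Y$ then the set of lines eventually burned from $X$ is contained in the set eventually burned from $Y$, and the closure of a closure is itself.

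For the upper bound, I would start from a \emph{shortest} connected chain $\emptyset=S_0\subseteq S_1\subseteq\cdots\subseteq S_\alpha=L$ with $\alpha=\mathrm{scc}(L)$, where $S_1$ is a single cell, say the entry $(r_1,c_1,s_1)$, and for each $i\ge 2$ let $\ell_i$ be a line of $S_i$ not in $S_{i-1}$ witnessing connectedness, so that $S_i$ is the smallest subsquare containing $S_{i-1}$ and meeting $\ell_i$. Take the lazy burning set $B=\{r_1,c_1,\ell_2,\ell_3,\ldots,\ell_\alpha\}$. These lines are pairwise distinct (the $\ell_i$ are distinct since $\ell_j\notin S_{j-1}\supseteq S_i$ for $i<j$, and none lies in $S_1$, while $r_1\ne c_1$ as they are of different line-types), so $|B|=\alpha+1$. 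To see $B$ burns $H^L$, set $X_1=\{r_1,c_1\}$ and $X_i=X_{i-1}\cup\{\ell_i\}$ for $i\ge2$, so $X_\alpha=B$. The entry $(r_1,c_1,s_1)$ forces $s_1$ to burn, so the closure of $X_1$ contains the lines of $S_1$; inductively, the closure of $X_i$ contains the lines of $S_{i-1}$ together with $\ell_i$, and since $S_{i-1}$ already supplies two nonempty line-classes, Lemma~\ref{lem:3unisubsquare} identifies this closure as the line set of the smallest subsquare containing $S_{i-1}$ and meeting $\ell_i$, namely $S_i$. Taking $i=\alpha$ shows $B$ burns all $3n$ lines, so $b_L(H^L)\le\alpha+1$.

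For the lower bound, let $B$ be a minimum lazy burning set with $b=|B|=b_L(H^L)$. A set of lines all of the same type never propagates, so such a set burns only itself; hence $B$ must meet at least two line-types. Order $B$ as $(b_1,\ldots,b_b)$ with $b_1,b_2$ of different types, and for $1\le j\le b$ let $Q_j$ be the closure of $\{b_1,\ldots,b_j\}$. The key observation is that minimality forces $b_j\notin Q_{j-1}$ for all $j\ge 2$: otherwise the closure of $B\setminus\{b_j\}$ already contains $Q_{j-1}\ni b_j$, hence contains $B$, hence equals $\mathcal{L}(L)$, contradicting minimality. Now $Q_2$ is the single cell (the entry determined by $b_1$ and $b_2$), and for $j\ge 3$ the prefix $\{b_1,\ldots,b_{j-1}\}$ meets two line-types, so by Lemma~\ref{lem:3unisubsquare} $Q_{j-1}$ is a subsquare and $Q_j$ is the smallest subsquare containing $Q_{j-1}$ and meeting the line $b_j\notin Q_{j-1}$; in particular $Q_j\supsetneq Q_{j-1}$. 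Thus, writing $S_0=\emptyset$, $S_1=Q_2$, and $S_i=Q_{i+1}$ for $1\le i\le b-1$, the sequence $\emptyset=S_0\subsetneq S_1\subsetneq\cdots\subsetneq S_{b-1}=Q_b=L$ is a connected chain of length $b-1$ (the bottom step $S_0\to S_1$ being automatic since $S_1$ is a single cell). Hence $\mathrm{scc}(L)\le b-1$, i.e.\ $b\ge\mathrm{scc}(L)+1$, and combining the two bounds gives equality.

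The main obstacle I anticipate is the bookkeeping around the bottom of the chain and the precise meaning of ``smallest''. One must reconcile the degenerate step $\emptyset\to S_1$ with the connectedness definition (any single cell is a minimal subsquare, so this step should be treated as automatic), and one must be careful that $Q_j$ is genuinely the \emph{smallest} subsquare containing $Q_{j-1}$ and meeting $b_j$, not merely \emph{a} subsquare with that property --- this is exactly the content of Lemma~\ref{lem:3unisubsquare}, so the real work is phrasing each induction so that its hypothesis (the already-burned lines contain a subsquare, hence two nonempty line-classes) is in force before the lemma is invoked.
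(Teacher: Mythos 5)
Your proof is correct and takes essentially the same route as the paper: the upper bound burns the two lines of the bottom cell together with the witnessing lines of a shortest connected chain, and the lower bound turns the propagation closures of prefixes of a minimum lazy burning set into a connected chain via Lemma~\ref{lem:3unisubsquare}. Your explicit minimality argument showing $b_j\notin Q_{j-1}$ (and hence strict growth of the chain) just makes precise a step the paper compresses into ``by construction, it follows that \dots is a connected chain.''
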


\begin{proof}
We will show that $b_L(H^L)$ is bounded both above and below by $\mathrm{scc}(L)+1$. For the lower bound, let $B = \{e_0,e_1,\ldots,e_k\}$ be a set of lines of $L$ that form a minimum lazy burning set for $H^L$, so $b_L(H^L)=k+1$. Without loss of generality, assume that the lines $e_0$ and $e_1$ are of different types (for example, a row and a column). Let $S_i$ represent the smallest subsquare of $L$ containing $\{e_0,e_1,\ldots,e_i\}$. Because $B$ is a lazy burning set, by Lemma~\ref{lem:3unisubsquare}, we must have that $S_k=L$, and thus by construction, it follows that $\emptyset = S_0 \subseteq S_1 \subseteq \ldots \subseteq S_k=L$ is a connected chain. Therefore, we have that $k \geq \mathrm{scc}(L)$, implying that $b_L(H^L) \geq \mathrm{scc}(L)+1$, which yields the lower bound.

For the upper bound, let $\emptyset = T_0 \subseteq T_1 \subseteq \ldots \subseteq T_j = L$ be a shortest connected chain of $L$, and let $\ell_0, \ell_1$ be two lines of $L$ whose intersection forms $T_1$. By definition, for each $1 \leq i \leq j-1$, there exists a line $\ell_{i+1}$ of $L$ such that $T_{i+1}$ is the smallest subsquare containing $T_i$ that intersects $\ell_{i+1}$. We claim that $C = \{\ell_0,\ell_1,\ldots,\ell_j\}$ is a lazy burning set for $L$, and indeed it must be, as the connectedness of the $T_i$ implies that $L$ is the only subsquare that contains all of $C$. Therefore, $b_L(H^L) \leq |C|=j+1=\mathrm{scc}(L)+1$, finishing the proof.
\end{proof}

By combining Theorem~\ref{thm:3unistcupper} and Corollary \ref{exact_value_H_L}, we immediately obtain the following result, which allows us to infer $b_L(H_L)$ given $b_L(H^L)$ and vice-versa.

\begin{theorem}
\label{relate_both_constructions}
If $L$ is an $n\times n$ Latin square, then $b_L(H_L)-n^2=b_L(H^L)-3n$.
\end{theorem}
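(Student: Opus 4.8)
The plan is to derive Theorem~\ref{relate_both_constructions} purely as an algebraic consequence of the two exact formulas already established, namely Corollary~\ref{exact_value_H_L} and Theorem~\ref{thm:3unistcupper}. Both of these express the relevant lazy burning number in terms of the single quantity $\mathrm{scc}(L)$, so the strategy is simply to eliminate $\mathrm{scc}(L)$ between them.

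First I would recall that Corollary~\ref{exact_value_H_L} gives $b_L(H_L) = n^2 - 3n + 1 + \mathrm{scc}(L)$, which rearranges to $b_L(H_L) - n^2 = \mathrm{scc}(L) - 3n + 1$. Next I would recall that Theorem~\ref{thm:3unistcupper} gives $b_L(H^L) = \mathrm{scc}(L) + 1$, which rearranges to $b_L(H^L) - 3n = \mathrm{scc}(L) + 1 - 3n = \mathrm{scc}(L) - 3n + 1$. Since the right-hand sides of these two rearranged identities are literally identical, the left-hand sides are equal, giving $b_L(H_L) - n^2 = b_L(H^L) - 3n$ as claimed. The whole proof is therefore two substitutions and a comparison.

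There is essentially no obstacle here: this is a bookkeeping corollary whose entire content is packaged in the earlier theorems. The only thing to be slightly careful about is that Corollary~\ref{exact_value_H_L} itself already depends on the chain of results Theorem~\ref{thm:lazyburning_mcs_equality} and Theorem~\ref{thm:mcs_equality_FuncOf_stc}, but since the excerpt instructs us to assume everything proved earlier, we may invoke it directly. I would keep the proof to a single short paragraph exhibiting the two displayed rearrangements and noting they coincide.

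\begin{proof}
By Corollary~\ref{exact_value_H_L}, we have $b_L(H_L) = n^2 - 3n + 1 + \mathrm{scc}(L)$, so that
\[
b_L(H_L) - n^2 = \mathrm{scc}(L) - 3n + 1.
\]
On the other hand, by Theorem~\ref{thm:3unistcupper}, we have $b_L(H^L) = \mathrm{scc}(L) + 1$, so that
\[
b_L(H^L) - 3n = \mathrm{scc}(L) + 1 - 3n = \mathrm{scc}(L) - 3n + 1.
\]
The right-hand sides of these two equations agree, and hence $b_L(H_L) - n^2 = b_L(H^L) - 3n$, as desired.
\end{proof}
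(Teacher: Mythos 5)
Your proof is correct and is exactly the argument the paper intends: the paper derives this theorem by combining Corollary~\ref{exact_value_H_L} and Theorem~\ref{thm:3unistcupper}, eliminating $\mathrm{scc}(L)$ just as you do. No issues.
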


We thus find that in both the $n$-uniform and $3$-uniform constructions of Latin square hypergraphs, determining the lazy burning number is equivalent to determining the shortest connected chain of the Latin square and the maximum cover-sequence. This suggests that both the shortest connected chain and the maximum cover-sequence might be more fundamental properties of a Latin square. In the $3$-uniform case, $\mathrm{scc}(L)$ measures exactly how many unique lines are needed to ensure that $L$ itself is the only subsquare that contains them all. We find it natural to relate this behavior with those of basis vectors in a vector space and thus identify $\mathrm{scc}(L)$ with the notion of \textit{dimension}; that is, how many unique elements are needed to span the entire space?

For finitely generated groups $G$, we find that the number of unique lines needed to span $L(G)$ is exactly two more than the number of generators of $G$, which we present in Theorem~\ref{thm:scc_group}. 
With this in mind, the lazy burning number still seems to be a natural form of dimension for a Latin square, as it adheres closely to the definition of dimension for other combinatorial objects, such as the dimension of a Steiner triple system, which we discuss in more detail in Section~4.

It is known that the columns and rows of any subsquare of a Latin square $L(G)$ generated by some group $(G,\oplus)$ must respectively be some coset $H\oplus a$ and $b \oplus H$ for suitably chosen $H,$ $a$, and $b$; see \cite[Theorem 1.6.4]{keed}. We proceed first by proving a technical lemma on the composition of such cosets. Note that lines $\ell$ of a Latin square each correspond to particular group elements, and two lines are considered distinct if they are lines of different type (for example, column and row) even if they both correspond to the same group element. Given a group $G$, we will let $\langle g_1,g_2,\ldots,g_n\rangle$ be the subgroup generated by the elements $g_1,g_2,\ldots,g_n$. We let $g^{-1}$ denote the group inverse of $g$.

\begin{lemma}\label{lem:coset_square}
    Let $(G,\oplus)$ be a group, let $L(G)$ be the Cayley table of $G$ with the borders removed, and let $\{\ell_1, \ell_2,\ldots,\ell_k\}$ be distinct lines of $L(G)$. Fix a row $r$ and a column $c$, and define the columns $c_i^*$ as follows: 
    $$c_i^* =\begin{cases} \ell_i & \ell_i \text{ is a column,} \\ r^{-1}\oplus \ell_i & \ell_i \text{ is a symbol,} \\ r^{-1} \oplus \ell_i \oplus c & \ell_i \text{ is a row.} \end{cases}$$ 
Define $s_i = c_i^* \oplus c^{-1}$. If $S$ is the smallest subsquare of $L(G)$ that intersects the lines $\{r,c,\ell_1,\ldots,\ell_k\}$, and $\mathrm{Col}(S)$ and $\mathrm{Row}(S)$ are respectively the set of columns and rows that $S$ intersects, then 
    \begin{align*}
        \mathrm{Col}(S) &= \langle s_1,\ldots,s_k\rangle \oplus c , \text{ and}\\
        \mathrm{Row}(S) &= r \oplus \langle s_1,\ldots,s_k \rangle.
    \end{align*}
\end{lemma}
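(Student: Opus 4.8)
The plan is to translate the combinatorial statement about the smallest subsquare intersecting $\{r,c,\ell_1,\ldots,\ell_k\}$ into a purely group-theoretic statement, using the fact (cited from \cite[Theorem~1.6.4]{keed}) that the rows, columns, and symbols of any subsquare of $L(G)$ are cosets of a common subgroup $H$, say $\mathrm{Row}(S)=r\oplus H$, $\mathrm{Col}(S)=H\oplus c$, and $\mathrm{Sym}(S)=r\oplus H\oplus c$ after a suitable normalization. The conversion quantities $c_i^*$ are engineered precisely so that ``$S$ intersects $\ell_i$'' becomes ``$c_i^*\in\mathrm{Col}(S)$'': if $\ell_i$ is a column this is immediate; if $\ell_i$ is a symbol line $s$, then $S$ meets it iff some entry $(r',c',s)$ of $S$ exists, and since $r'\oplus c'=s$ with $r'=r$ achievable (because $r\in\mathrm{Row}(S)$), this forces the column $r^{-1}\oplus s$ to lie in $\mathrm{Col}(S)$; the row case is analogous with the extra $\oplus c$ term coming from translating a row coset into a column coset. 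I would first prove this equivalence carefully as the opening step, so that the problem reduces to: find the smallest subsquare whose column set is a coset $H\oplus c$ containing $c$ and containing each $c_i^*$.

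Next I would set $s_i=c_i^*\oplus c^{-1}$, so that $c_i^*\in H\oplus c$ iff $s_i\in H$; hence the condition ``$\mathrm{Col}(S)\supseteq\{c,c_1^*,\ldots,c_k^*\}$'' is exactly ``$H\supseteq\{s_1,\ldots,s_k\}$.'' Among subgroups $H$ containing a fixed finite set, there is a unique smallest one, namely $\langle s_1,\ldots,s_k\rangle$; and the subsquare it determines does intersect $r$ (since $r\in r\oplus H$) and $c$ (since $c\in H\oplus c$). So setting $H=\langle s_1,\ldots,s_k\rangle$ yields a subsquare intersecting all of $\{r,c,\ell_1,\ldots,\ell_k\}$, and any subsquare doing so has column set $H'\oplus c$ with $H'\supseteq\langle s_1,\ldots,s_k\rangle$, hence is at least as large. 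This gives $\mathrm{Col}(S)=\langle s_1,\ldots,s_k\rangle\oplus c$ and $\mathrm{Row}(S)=r\oplus\langle s_1,\ldots,s_k\rangle$ as claimed, using that the common subgroup $H$ appears (up to the coset translations) as both the row and column ``difference set.''

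The step I expect to require the most care is the first one: verifying in each of the three cases (column, symbol, row) that $S$ intersects $\ell_i$ if and only if $c_i^*\in\mathrm{Col}(S)$, and in particular checking that the normalization of the subsquare cosets is consistent — i.e., that one can simultaneously write $\mathrm{Row}(S)=r\oplus H$ and $\mathrm{Col}(S)=H\oplus c$ for the \emph{same} $H$ once we know $r\in\mathrm{Row}(S)$ and $c\in\mathrm{Col}(S)$, and that the symbol coset is then $r\oplus H\oplus c$. This is where the noncommutativity of $G$ matters, since left and right cosets of $H$ can differ; the formulas for $c_i^*$ with their left-multiplication by $r^{-1}$ and right-multiplication by $c$ (or $\oplus c$) are exactly the bookkeeping needed to handle this, and I would work through the row case explicitly as the representative calculation. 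Everything after the reduction is the standard fact that the intersection of all subgroups containing a set is the subgroup they generate, so the remaining work is light.
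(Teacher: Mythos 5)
Your proof is correct, and it takes a partly different route from the paper's. Both arguments translate each line $\ell_i$ into the column $c_i^*$ and both rest on the coset description of subsquares from \cite[Theorem~1.6.4]{keed}; the containment $\mathrm{Col}(S)\subseteq\langle s_1,\ldots,s_k\rangle\oplus c$ is obtained in essentially the same way in both (minimality of $S$ against the candidate subsquare with rows $r\oplus\langle s_1,\ldots,s_k\rangle$ and columns $\langle s_1,\ldots,s_k\rangle\oplus c$). Where you genuinely diverge is the reverse containment: the paper identifies $S$ with the burned closure of $\{r,c,\ell_1,\ldots,\ell_k\}$ via Lemma~\ref{lem:3unisubsquare} and then runs an explicit propagation computation showing that every column of the form $s_{\beta_1}^{\alpha_1}\oplus\cdots\oplus s_{\beta_n}^{\alpha_n}\oplus c$ burns, whereas you avoid burning entirely: writing $\mathrm{Col}(S)=H\oplus c$ and $\mathrm{Row}(S)=r\oplus H$ for a common subgroup $H$, your equivalence ``$S$ meets $\ell_i$ if and only if $s_i\in H$'' together with the fact that $\langle s_1,\ldots,s_k\rangle$ is the unique smallest subgroup containing the $s_i$ yields both inclusions at once. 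Your version is shorter, argues by containment rather than cardinality (so finiteness is never needed), and isolates the group theory cleanly; the paper's version keeps the argument inside the burning framework of Lemma~\ref{lem:3unisubsquare}, which is the machinery reused later for Theorem~\ref{thm:scc_group}. One small point to make explicit when writing yours up: you also need the converse of the coset description, namely that for any subgroup $H$ containing the $s_i$, the rows $r\oplus H$ and columns $H\oplus c$ really do carry a subsquare (with symbols $r\oplus H\oplus c$) meeting all of $\{r,c,\ell_1,\ldots,\ell_k\}$; this is an easy check, and the paper's proof needs it implicitly as well, but your phrase ``the subsquare it determines'' glosses over it.
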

\begin{proof}
    Using \cite[Theorem 1.6.4]{keed}, it is sufficient to show that this claim holds only for $\mathrm{Col}(S).$ We show that $\mathrm{Col}(S)$ is both a subset and superset of $\langle s_1,\ldots,s_k\rangle \oplus c$, beginning with the subset argument. By Lemma~\ref{lem:3unisubsquare}, we have that $S$ is the same subsquare that results from burning the lines $\{r,c,\ell_1,\ldots,\ell_k\}$. Consider now $\mathrm{Col}(S)$ and the line $\ell_i$. If $\ell_i$ is a column, then $\mathrm{Col}(S)$ must intersect $\ell_i$. If $\ell_i$ is a symbol, then because row $r$ is burned, we have that $\mathrm{Col}(S)$ must intersect $r^{-1}\oplus \ell_i$. 
    Finally, if $\ell_i$ is a row, then because column $c$ is burned, the symbol $\ell_i \oplus c$ must be burned, and so due to row $r$, we have that $\mathrm{Col}(S)$ must intersect the column $r^{-1} \oplus \ell_i \oplus c$. 
    
    Therefore, the lines $\{r,c,c_1^*,\ldots,c_k^*\}$ must all intersect $S$, and moreover, we have that $S$ must be the smallest subsquare intersecting these lines.  To see this, note that by a similar argument performed above, any subsquare that intersects the lines $\{r,c,c_1^*,\ldots,c_k^*\}$ must also intersect the lines $\{r,c,\ell_1,\ldots,\ell_k\},$ and vice versa. 
    Thus, as $\langle s_1,s_2,\ldots,s_k\rangle \oplus c$ intersects the lines $\{c,c_1^*,\ldots,c_k^*\}$, due to the minimality of $S$ we must have that $\mathrm{Col}(S) \subseteq \langle s_1,\ldots,s_k\rangle \oplus c.$
    
    We show the reverse inclusion as follows. If $g \in \langle s_1,s_2,\ldots,s_k\rangle \oplus c$, then there there exists some positive integers $\alpha_i$ and natural number $n$ such that $g = s_{\beta_1}^{\alpha_{1}}\oplus s_{\beta_2}^{\alpha_{2}}\oplus\cdots \oplus s_{\beta_n}^{\alpha_n} \oplus c,$ where each $\beta_i \in \{1,2,\ldots,k\}$. We wish to show that all elements of this form belong to $\mathrm{Col}(S)$. By Lemma~\ref{lem:3unisubsquare}, it suffices to show that columns of this form become burned after burning the lines $\{r,c,\ell_1,\ldots,\ell_k\}$. We will therefore show that if the lines $\{r,c,\ell_1,\ldots,\ell_k\}$ are burned, then given any burned column $h'$ of the form $s_{\beta_1}^{\alpha_{1}}\oplus s_{\beta_2}^{\alpha_{2}}\oplus\cdots \oplus s_{\beta_n}^{\alpha_n} \oplus c,$ the column $h=s_{\beta_1}^{\alpha_{1}}\oplus s_{\beta_2}^{\alpha_{2}}\oplus\cdots \oplus s_{\beta_n}^{\alpha_n} \oplus s_{j} \oplus c$ must burn as well for any $j \in \{1,2,\ldots,k\}$. This shows that any group element of the form of $g$ above will eventually be burned through repeated applications of this process, which implies that all of $\langle s_1,s_2,\ldots,s_k\rangle \oplus c$ will burn. 
    
    Since both row $r$ and column $h'$ are burned, the symbol $r\oplus h'$ also burns, and combining this with the burned column $c$ shows that the row $r\oplus h' \oplus c^{-1}$ must burn as well. From arguments made in the previous case, the column $c_j^*$ must be burned, and so combining this with row $r\oplus h' \oplus c^{-1}$ we have that the symbol $r\oplus h' \oplus c^{-1} \oplus c_j^*$ must burn. Finally, row $r$ and symbol $r\oplus h' \oplus c^{-1} \oplus c_j^*$ together imply that column $h' \oplus c^{-1} \oplus c_j^*$ must burn. Further, we have that $$h' \oplus c^{-1} \oplus c_j^* = s_{\beta_1}^{\alpha_{1}}\oplus s_{\beta_2}^{\alpha_{2}}\oplus\cdots \oplus s_{\beta_n}^{\alpha_n} \oplus c \oplus c^{-1} \oplus c_j^* = s_{\beta_1}^{\alpha_{1}}\oplus s_{\beta_2}^{\alpha_{2}}\oplus\cdots \oplus s_{\beta_n}^{\alpha_n} \oplus s_j \oplus c= h,$$ where the last equality follows from the definition of $s_j.$ The claim follows, and therefore, $$\langle s_1,s_2,\ldots,s_k\rangle \oplus c \subseteq \mathrm{Col}(S),$$ completing the proof. 
\end{proof}

We are now ready to determine the length of a shortest connected chain in a Latin square $L$ corresponding to a finitely generated group. As an immediate corollary, we obtain the lazy burning number of $H^L$. 

\begin{theorem}\label{thm:scc_group}
    Let $(G,\oplus) = \langle g_1,g_2,\ldots,g_{\alpha}\rangle$ be a group such that $\{g_1, g_2,\ldots, g_\alpha\}$ is a set of generators of $G$ of the smallest possible cardinality, and let $L(G)$ be the Cayley table of $G$ with the borders removed. We then have that $\mathrm{scc}(L(G)) = \alpha+1$. 
\end{theorem}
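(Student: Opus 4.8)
The plan is to show both inequalities $\mathrm{scc}(L(G)) \le \alpha + 1$ and $\mathrm{scc}(L(G)) \ge \alpha + 1$ using Lemma~\ref{lem:coset_square}, which translates "smallest subsquare intersecting a set of lines" into "subgroup generated by the associated group elements $s_i$." The key insight is that a connected chain of subsquares corresponds, via Lemma~\ref{lem:coset_square}, to an ascending chain of subgroups $\langle \varnothing\rangle = K_0 \subsetneq K_1 \subsetneq \cdots \subsetneq K_\beta = G$ in which each $K_{i+1}$ is the smallest subgroup containing $K_i$ together with one new element; the minimal length of such a subgroup chain is precisely the minimal number of generators of $G$ (since at each step we adjoin one generator, and after $\alpha$ steps we must reach all of $G$, while fewer than $\alpha$ steps would exhibit a generating set of size $< \alpha$). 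The extra ``$+1$'' comes from the two initial lines $\ell_0, \ell_1$ of different types needed to pin down the $1\times 1$ square $T_1$ before any generators get adjoined.

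For the upper bound $\mathrm{scc}(L(G)) \le \alpha+1$: I would take a minimum generating set $\{g_1, \ldots, g_\alpha\}$ and build an explicit connected chain of length $\alpha+1$. Let $S_1$ be the cell corresponding to the identity (row $e$, column $e$, symbol $e$), and for $2 \le i \le \alpha+1$ adjoin the column line $\ell_i$ corresponding to the group element $g_{i-1}$. By Lemma~\ref{lem:coset_square} (applied with $r = c = e$, so that $s_i = c_i^* = \ell_i = g_{i-1}$), the smallest subsquare intersecting $\{e, e, g_1, \ldots, g_{i-1}\}$ has column set $\langle g_1, \ldots, g_{i-1}\rangle$, which strictly increases at each step since the $g_j$ minimally generate $G$, and equals $G$ exactly when $i - 1 = \alpha$. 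Thus $S_0 \subseteq S_1 \subseteq \cdots \subseteq S_{\alpha+1} = L(G)$ is a connected chain of length $\alpha+1$, and each step is "smallest subsquare containing the previous one and intersecting a new line" by construction, so the chain is connected.

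For the lower bound $\mathrm{scc}(L(G)) \ge \alpha+1$: I would take an arbitrary shortest connected chain $\varnothing = S_0 \subseteq S_1 \subseteq \cdots \subseteq S_\beta = L(G)$ with $\beta = \mathrm{scc}(L(G))$. The subsquare $S_1$ is a single cell; after applying an appropriate autotopism (translation) of $L(G)$ I may assume $S_1$ is the identity cell and hence that the two lines $\ell_0, \ell_1$ determining $S_1$ are a row and column both corresponding to the identity, and that $\{\ell_2, \ldots, \ell_\beta\}$ are the new lines adjoined at steps $2, \ldots, \beta$. For $i \ge 1$, let $K_i \le G$ be the subgroup such that $\mathrm{Col}(S_i) = K_i$ (using Lemma~\ref{lem:coset_square} with $r = c = e$; note $K_1 = \langle\varnothing\rangle$ is trivial since $S_1$ is a single cell). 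Connectedness says $S_{i+1}$ is the smallest subsquare containing $S_i$ and intersecting $\ell_{i+1}$; by Lemma~\ref{lem:coset_square} this forces $K_{i+1} = \langle K_i, s_{i+1}\rangle$ where $s_{i+1}$ is the group element associated to $\ell_{i+1}$ via the formula in the lemma. Hence $\{s_2, \ldots, s_\beta\}$ generates $K_\beta = \mathrm{Col}(S_\beta) = \mathrm{Col}(L(G)) = G$, exhibiting a generating set of size $\beta - 1$; by minimality of $\alpha$ we get $\beta - 1 \ge \alpha$, i.e. $\mathrm{scc}(L(G)) \ge \alpha+1$.

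The main obstacle I anticipate is bookkeeping around the "extra" initial step: one must be careful that $S_1$ being a single cell corresponds to the trivial subgroup (not a generator being adjoined "for free"), and that the reduction to the identity cell via translation is legitimate and does not disturb the line-types or the chain structure — this is exactly the kind of normalization performed in the proof of Theorem~\ref{thm:stc_mod_Z2k}, and Lemma~\ref{lem:coset_square} is stated with general $r, c$ precisely to make this normalization painless. A secondary point to verify carefully is that in the lower-bound direction the lines $\ell_{i+1}$ may be rows, columns, or symbols, but the formula for $s_{i+1}$ in Lemma~\ref{lem:coset_square} handles all three cases uniformly, so no case analysis on line type is needed beyond invoking the lemma. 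Finally, one should note that $b_L(H^L) = \mathrm{scc}(L(G)) + 1 = \alpha + 2$ follows immediately from Theorem~\ref{thm:3unistcupper}, giving the advertised corollary.
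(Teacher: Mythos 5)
Your proposal is correct and follows essentially the same route as the paper: both directions hinge on Lemma~\ref{lem:coset_square}, with the upper bound built from the smallest subsquares meeting $\{r_e,c_e,c_{g_1},\ldots,c_{g_{i-1}}\}$ (strict growth coming from minimality of the generating set) and the lower bound extracting a generating set of size $\beta-1$ from a shortest connected chain. The only (harmless) deviation is your translation-to-the-identity normalization, which the paper avoids because Lemma~\ref{lem:coset_square} is stated for arbitrary $r,c$ and so works directly with cosets.
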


\begin{proof}
    We show that $\alpha+1$ is both a lower and upper bound for $\mathrm{scc}(L(G)),$ beginning with the lower bound. Let $L_0 \subseteq L_1 \subseteq \cdots \subseteq L_k = L(G)$ be a connected chain of $L$. By definition, for each $1 \leq i \leq k-1$, there exists a line $\ell_i$ such that $L_{i+1}$ is the smallest subsquare containing $L_i$ that intersects $\ell_i$. If $r$ and $c$ are the unique row and column of $L_1$, this implies that $L_{i+1}$ is the smallest subsquare intersecting the lines $\{r,c,\ell_1,\ell_2,\ldots,\ell_i\}$. Therefore, using Lemma~\ref{lem:coset_square}, we know that $\mathrm{Col}(L_k) = \langle c_1^*\oplus c^{-1},c_2^*\oplus c^{-1}\ldots,c_k^*\oplus c^{-1}\rangle \oplus c = G$, implying that $k-1 \geq \alpha$ and thus, that $\mathrm{scc}(L(G)) \geq \alpha+1$. 

    For the upper bound, let $c_{g_i}$ be the column of $L(G)$ corresponding to the group element $g_i$ and define a new sequence of nested subsquares of $L(G)$ denoted $S_i$, such that $S_i$ is the smallest subsquare intersecting the lines $\{r_e,c_e,c_{g_1},c_{g_2},\ldots,c_{g_{i-1}}\}$. It is evident that $S_0 \subseteq S_1 \subseteq \cdots \subseteq S_{\alpha+1}$, and by Lemma~\ref{lem:coset_square} we have that $\mathrm{Col}(S_{\alpha+1}) = G,$ implying that this forms a chain. Additionally, again by Lemma~\ref{lem:coset_square}, the line $c_{g_i}$ does not intersect $S_i$. We thus have that for each $1 \leq i \leq k-1$ there exists a line $\ell_i$ that does not intersect $S_i$ and such that $S_{i+1}$ is the smallest subsquare containing $S_i$ that intersects $\ell_i$. By definition, this chain is connected, and as the length of this chain is $\alpha+1$, we have that $\mathrm{scc}(L(G)) \leq \alpha +1$, completing the proof.
\end{proof}

The result below follows directly from Theorem~\ref{thm:scc_group}.

\begin{corollary}
\label{dim_plus_two}
 Let $(G,\oplus) = \langle g_1,\ldots,g_{\alpha}\rangle$ be a group and let $L(G)$ be the Latin square generated by $G$. We then have that $b_L(H^L) = \alpha+2$. 
\end{corollary}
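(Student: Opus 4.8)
The plan is to derive Corollary~\ref{dim_plus_two} as an immediate consequence of Theorem~\ref{thm:scc_group} together with Theorem~\ref{thm:3unistcupper}. Theorem~\ref{thm:3unistcupper} gives the general identity $b_L(H^L) = \mathrm{scc}(L) + 1$ for any Latin square $L$, and Theorem~\ref{thm:scc_group} computes $\mathrm{scc}(L(G)) = \alpha + 1$ when $G$ is generated by a minimal generating set of size $\alpha$. Substituting the second into the first yields $b_L(H^L) = (\alpha+1) + 1 = \alpha + 2$, which is exactly the claim.

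The only genuine subtlety is a bookkeeping one: the statement of Corollary~\ref{dim_plus_two} writes $G = \langle g_1, \ldots, g_\alpha \rangle$ without explicitly saying that $\{g_1,\ldots,g_\alpha\}$ is a \emph{smallest} generating set, whereas Theorem~\ref{thm:scc_group} requires minimality. So the proof should first observe that, since $\mathrm{scc}(L(G))$ is an invariant of $G$ alone, we may (and should) interpret $\alpha$ as the minimal number of generators — i.e., replace the given generating set by a minimum one of size $\alpha' = \mathrm{rank}(G)$ — and apply Theorem~\ref{thm:scc_group} to that set. If one instead wants to read the corollary with $\alpha$ equal to the size of the specific (possibly non-minimal) generating set displayed, then the upper-bound half of Theorem~\ref{thm:scc_group}'s proof already shows $\mathrm{scc}(L(G)) \le \alpha + 1$ for \emph{any} generating set, giving $b_L(H^L) \le \alpha + 2$; combined with the lower bound in terms of the minimal rank this pins things down. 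Either reading is handled by the same two cited results, so no new argument is needed.

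Concretely, I would write: ``By Theorem~\ref{thm:3unistcupper}, $b_L(H^L) = \mathrm{scc}(L(G)) + 1$. Taking $\{g_1,\ldots,g_\alpha\}$ to be a generating set of $G$ of smallest cardinality, Theorem~\ref{thm:scc_group} gives $\mathrm{scc}(L(G)) = \alpha + 1$. Therefore $b_L(H^L) = \alpha + 2$, as claimed.'' No step here is an obstacle in the usual sense — the work was all done in Theorem~\ref{thm:scc_group} (whose proof goes through Lemma~\ref{lem:coset_square} and the coset description of subsquares of a Cayley table). The ``main obstacle'' is purely expository: making sure the meaning of $\alpha$ in the corollary statement is consistent with the minimality hypothesis used upstream, which I would resolve by explicitly invoking a smallest generating set in the first sentence of the proof.
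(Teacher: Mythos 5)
Your proposal is correct and follows exactly the paper's route: the paper derives Corollary~\ref{dim_plus_two} directly from Theorem~\ref{thm:scc_group} together with the identity $b_L(H^L) = \mathrm{scc}(L)+1$ of Theorem~\ref{thm:3unistcupper}. Your observation about reading $\alpha$ as the size of a smallest generating set (matching the hypothesis of Theorem~\ref{thm:scc_group}) is a fair expository point, but it does not change the argument, which is the same as the paper's.
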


\section{Further Directions}

A Steiner triple system, or STS, is a $3$-uniform hypergraph in which every pair of vertices appears together in exactly one hyperedge. Analogously to Latin squares, Steiner triple systems may be nested one inside the other; if an STS $G$ is a subhypergraph of another STS $H$, then we say that $G$ is a \emph{subsystem} of $H$. Burning Steiner triple systems was studied recently in \cite{our_STS_paper}, and reminiscent of Theorem~\ref{thm:3unistcupper}, the length of a shortest tight chain of subsystems is an upper bound on the lazy burning number of an STS. For this reason, we suspect that techniques from \cite{our_STS_paper} may apply in the context of Latin squares. 

The \emph{dimension} of an STS $H$ is the maximum number $d$ such that any set of $d$ vertices in $H$ is contained within a proper subsystem of $H$; see \cite{Valle_Dukes, Hilt_Tier}. In \cite{our_STS_paper}, it was shown that the lazy burning number of an STS is equal to one more than its dimension; the proof relies heavily on nested chains of subsystems that are tight in some sense, which is a concept that bears a resemblance to connected chains in Latin squares. We therefore ask if there is a natural definition for the dimension of a Latin square, and if so, does it relate to the lazy burning number of $H^L$? 

As we mainly focused on the lazy variant of burning, it would be interesting to explore the round-based version of burning on hypergraphs arising from Latin squares. It would also be interesting to explore both the lazy and round-based burning on other families of hypergraphs, such as those arising from mutually orthogonal Latin squares or Latin rectangles.

We explored a number of new structures related to Latin squares, such as connected chains of subsquares and cover-sequences. While these concepts were mainly used as tools to study lazy hypergraph burning, we think they should be promising to study in their own right. 

\section{Acknowledgements}
A grant from NSERC supported the first author. The second author acknowledges support from an NSERC CGS D scholarship.


\begin{thebibliography}{99}

\bibitem{no_subsquares} J.\ Allsop, I.M.\ Wanless, Latin squares without proper subsquares, Preprint 2024.

\bibitem{berge1} C.\ Berge, \emph{Graphs and Hypergraphs}, Elsevier, New York, 1973.

\bibitem{berge2} C.\ Berge, \emph{Hypergraphs: The Theory of Finite Sets}, North-Holland, Amsterdam, 1989.

\bibitem{MR3942279} D.\ Best, T.G.\ Marbach, R.\ Stones, I.M.\ Wanless, Covers and partial transversals of Latin squares, \emph{Designs, Codes and Cryptography} \textbf{87} (2019) 1109--1136.

\bibitem{expander_triple_systems} Z.L.\ Bl\'azsik, Z.L.\ Nagy, Spreading linear triple systems and expander triple systems, \emph{European Journal of Combinatorics} \textbf{89} (2020) 103155.

\bibitem{survey} A.\ Bonato. A survey of graph burning, \emph{Contributions to Discrete Mathematics} \textbf{16} (2021) 185--197.

\bibitem{bbook} A.\ Bonato, \emph{An Invitation to Pursuit-Evasion Games and Graph Theory}, American Mathematical
Society, Providence, Rhode Island, 2022.

\bibitem{first_paper} A.\ Bonato, J.\ Janssen, E.\ Roshanbin, Burning a graph as a model of social contagion, In: \emph{Proceedings of WAW'14}, 2014.

\bibitem{BJR0} A.\ Bonato, J.\ Janssen, E.\ Roshanbin. How to burn a graph, \emph{Internet Mathematics} \textbf{1}-\textbf{2} (2016) 85--100.

\bibitem{bretto} A.\ Bretto, \emph{Hypergraph Theory: an Introduction}, Springer, 2013.

\bibitem{trans1}  A.E.\ Brouwer, A.\ de Vries, R.\ Wieringa, A lower bound for the length of partial transversals in a Latin square, \emph{Nieuw Archief Voor Wiskunde} \textbf{26} (1978) 330-–332.

\bibitem{trans_like1} D.\ Bryant, J.\ Egan, B.M.\ Maenhaut, I.M.\ Wanless, Indivisible plexes in Latin squares, \emph{Designs, Codes and Cryptography} \textbf{52} (2009) 93--105.

\bibitem{our_STS_paper} A.\ Burgess, P.\ Danziger, C.\ Jones, T.G.\ Marbach, D.\ Pike, Burning Steiner triple systems, Preprint 2024.

\bibitem{our_paper!} A.\ Burgess, C.\ Jones, D.\ Pike, Extending graph burning to hypergraphs, Preprint 2024.

\bibitem{designs_handbook} C.\ Colbourn, J.\ Dinitz, \emph{Handbook of Combinatorial Designs 2nd ed.}, Chapman \& Hall/CRC, 2007.

\bibitem{Valle_Dukes} C.\ del Valle, P.\ Dukes, Some new block designs of dimension three, \emph{Bulletin of the Institute of Combinatorics and its Applications} \textbf{87} (2019) 85--102.

\bibitem{orth1} F.\ Demirkale, D.M.\ Donovan, J.\ Hall, A.\ Khodkar, A.\ Rao, Difference covering arrays and pseudo‐orthogonal Latin squares, \emph{Graphs and Combinatorics} \textbf{32} (2016) 1353--1374.

\bibitem{orth2} F.\ Demirkale, D.M.\ Donovan, A.\ Khodkar, Direct constructions for general families of cyclic mutually nearly orthogonal latin squares, \emph{Journal of Combinatorial Designs} \textbf{23} (2015) 195--203.

\bibitem{orth5} F.\ Demirkale, D.M.\ Donovan, J.I.\ Kokkala, T.G.\ Marbach, The enumeration of cyclic mutually nearly orthogonal Latin squares, \emph{Journal of Combinatorial Designs} \textbf{27} (2019) 265--276.

\bibitem{subsq_in_rectangles} A.\ Divoux, T.\ Kelly, C.\ Kennedy, J.\ Sidhu, Subsquares in random Latin squares and rectangles, Preprint 2024.

\bibitem{trans2} D.A.\ Drake, Maximal sets of Latin squares and partial transversals \emph{Journal of Statistical Planning and Inference} \textbf{1} (1977) 143--149.

\bibitem{hall_paige_conj2} A.B.\ Evans, The admissibility of sporadic simple groups, \emph{Journal of Algebra} \textbf{321} (2009) 105--116.

\bibitem{many_maximal_partial_transversal} A.B.\ Evans, A.\ Mammoliti, I.M.\ Wanless, Latin squares with maximal partial transversals of many lengths, \emph{Journal of Combinatorial Theory, Series A} \textbf{180} (2021) 105403.

\bibitem{canonical_labellings} M.J.\ Gill, A.\ Mammoliti, I.M.\ Wanless, Canonical labelling of Latin squares in average-case polynomial time, Preprint 2024.

\bibitem{hall_paige_conj} M.\ Hall, L.J.\ Paige Complete mappings of finite groups, \emph{Pacific Journal of
Mathematics} \textbf{5} (1955) 541--549.

\bibitem{trans3}  P.\ Hatami, P.W.\ Shor, A lower bound for the length of a partial transversal in a Latin square, \emph{Journal of Combinatorial Theory, Series A} \textbf{115} (2008) 1103--1113.

\bibitem{Hilt_Tier} A.\ Hilton, L.\ Tierlinck, Dimension in Steiner Triple Systems, \emph{Annals of Discrete Mathematics} \textbf{7} (1980) 73--87. 

\bibitem{mythesis} C.\ Jones, \emph{Hypergraph Burning}, Masters Thesis, Memorial University of Newfoundland, 2023.

\bibitem{keed} A.D.\ Keedwell, J.\ D\'enes, \emph{Latin Squares and Their Applications, 2nd edition}, Elsevier, Amsterdam, 2015.

\bibitem{trans3} P.\ Keevash, A.\ Pokrovskiy, B.\ Sudakov, L.\ Yepremyan, New bounds for Ryser’s conjecture and
related problems, \emph{Transactions of the American Mathematical Society, Series B} \textbf{9} (2022) 288--321.

\bibitem{realization_five} T.\ Kemp, Realizations with five subsquares, Preprint 2024.

\bibitem{realization_hypercubes} D.\ Donovan, T.\ Kemp, J.\ Lefevre, Latin hypercubes realizing integer partitions, Preprint 2024.

\bibitem{trans4} K.K.\ Koksma A lower bound for the order of a partial transversal in a Latin square, \emph{Journal of Combinatorial Theory} \textbf{7} (1969) 94--95.

\bibitem{MR4488316} M.\ Kwan, A.\ Sah, M.\ Sawhney, Large deviations in random Latin squares, \emph{Bulletin of the London Mathematical Society} \textbf{54} (2002) 1420--1438.

\bibitem{Kwan_substructures} M.\ Kwan, A.\ Sah, M.\ Sawhney, M.\ Simkin, Substructures in Latin squares, \emph{Israel Journal of Mathematics} \textbf{256} (2023) 363--416.

\bibitem{orth3} P.C.\ Li, G.H.J.\ van Rees, Nearly orthogonal latin squares, \emph{Journal of Combinatorial Mathematics and Combinatorial Computing} \textbf{62} (2007) 13--24.

\bibitem{MR1685535} B.D.\ McKay, I.M.\ Wanless, Most Latin squares have many subsquares, \emph{Journal of Combinatorial Theory, Series A}, \textbf{86} (1999) 322--347.

\bibitem{montgomary_RBS_proof} R.\ Montgomery, A proof of the Ryser-Brualdi-Stein conjecture for large even $n$, Preprint 2024.

\bibitem{transveral_review} R.\ Montgomery, Transversals in Latin squares, Preprint 2024.

\bibitem{orth4} E.B.\ Pasles, D.\ Raghavarao, Mutually nearly orthogonal latin squares of order 6, \emph{Utilitas Mathematica} \textbf{65} (2004) 65--72.

\bibitem{trans5}  P.W.\ Shor, A lower bound for the length of a partial transversal in a Latin square, \emph{Journal of Combinatorial Theory, Series A} \textbf{33} (1982) 1--8.

\bibitem{design_text} D.\ Stinson, \emph{Combinatorial Designs: Constructions and Analysis}, Springer, 2004.

\bibitem{trans_like2} M.\ Vaughan-Lee, I.M.\ Wanless, Latin squares and the Hall-Paige conjecture, \emph{Bulletin of the London Mathematical Society} \textbf{35} (2003) 1--5.

\bibitem{trans_like3} I.M.\ Wanless, A generalisation of transversals for Latin squares, \emph{Electronic Journal of Combinatorics} \textbf{9} (2002) R12.

\bibitem{wanless_survey} I.M.\ Wanless, Transversals in Latin squares: a survey. In: Chapman R, ed. \emph{Surveys in Combinatorics} 2011. London Mathematical Society Lecture Note Series. Cambridge University Press (2011) 403--437.

\bibitem{hall_paige_conj3} S.\ Wilcox, Reduction of the Hall–Paige conjecture to sporadic simple groups, \emph{Journal of Algebra} \textbf{321} (2009) 1407--1428.

\bibitem{trans6} D.E.\ Woolbright An $n \times n$ Latin square has a transversal with at least $n - \sqrt{n}$ distinct symbols, \emph{Journal of Combinatorial Theory, Series A} \textbf{24} (1978) 235--237.

\end{thebibliography}
\end{document}